\documentclass[11pt]{article}

\usepackage[margin=1in]{geometry}
\usepackage{amsmath,amssymb,amsthm,mathtools}
\usepackage{tikz}
\usetikzlibrary{arrows.meta,decorations.pathreplacing}
\usepackage[
  colorlinks=true,
  linkcolor=blue,
  urlcolor=blue,
  citecolor=blue
]{hyperref}
\usepackage[capitalize,nameinlink]{cleveref}

\hypersetup{
  pdftitle={Strong freezing in the binary perceptron model},
  pdfauthor={Shuyang Gong, Shuangping Li}
}

\newtheorem{theorem}{Theorem}[section]
\newtheorem{lemma}[theorem]{Lemma}
\newtheorem{proposition}[theorem]{Proposition}

\numberwithin{equation}{section}

\newcommand{\PP}{\mathbb P}
\newcommand{\EE}{\mathbb E}

\newcommand{\ind}{\mathbf 1}
\newcommand{\ip}[2]{\langle #1,#2\rangle}

\newcommand{\norm}[1]{\lVert #1\rVert}

\newcommand{\Pois}{\operatorname{Pois}}
\newcommand{\simr}{\mathrel{\sim_r}}

\usepackage[dvipsnames]{xcolor}

\title{Strong freezing in binary perceptron models}

\author{%
  Shuyang Gong\textsuperscript{1,2}%
  \thanks{Email: \href{mailto:shuyang.gong@yale.edu}{\texttt{shuyang.gong@yale.edu}}}%
  \quad
  Shuangping Li\textsuperscript{2}%
  \thanks{Email: \href{mailto:shuangping.li@yale.edu}{\texttt{shuangping.li@yale.edu}}}\\[6pt]
  {\small \textsuperscript{1}Institute for Foundations of Data Science, Yale University}\\
  {\small \textsuperscript{2}Department of Statistics and Data Science, Yale University}
}

\date{\today}

\begin{document}
\maketitle

\begin{abstract}
The binary perceptron model concerns the set of solutions obtained by
intersecting random half-spaces. 
One of the most striking physics predictions for this model,
going back to Krauth and M\'ezard \cite{KM89} and sharpened in
\cite{HWK13,HK14}, is strong freezing: typical solutions are at Hamming
distance of order $n$ from every other solution. This phenomenon has attracted a lot of attention over the years, partly because of its intricate connections to statistical physics and computational complexity. We prove strong freezing at every positive constraint density for a general class of perceptron models and for any disorder with independent entries of mean zero and unit variance. More precisely, with probability at least
$1-e^{-c\sqrt n}$, all but an $e^{-c\sqrt n}$ fraction of the solutions have
no other solution within Hamming distance $\delta n$. In particular, our
result includes the asymmetric binary perceptron at every margin, for which
strong freezing has been a long-standing open problem \cite{KM89,HWK13,HK14}, and the symmetric binary perceptron, for which we recover the strong freezing results of
Perkins and Xu \cite{PX24} and of Abbe, Li and Sly \cite{ALS21}. 
Our proof crucially relies on replacing the number of constraints by a
Poisson variable, which makes the numbers of constraints cutting out
different subsets of the hypercube independent. Since each constraint can only
remove solutions, the FKG inequality then decouples the partition function
from the event that two nearby configurations are both solutions.
\end{abstract}

\section{Introduction}\label{sec:intro}

A basic problem in high-dimensional probability and geometry concerns the
intersection of a fixed set $A\subseteq\mathbb R^n$ with $m$ independent
random half-spaces. One asks how large $m$ can be before this intersection
becomes empty, and what the intersection looks like before then. The problem
arises from the perceptron of Rosenblatt \cite{Ros58}, a model of a single
neuron that classifies an input vector according to the sign of a weighted
sum of its coordinates. The perceptron stores a collection of random
patterns, that is, random inputs with prescribed outputs, if its weights are
chosen so that every pattern is classified correctly \cite{Hop82,EV01}. Each
pattern confines the weight vector to a half-space, so the weight vectors
that store all $m$ patterns, which we call solutions, form the intersection
of $m$ independent random half-spaces with the set $A$ of admissible weights.
When $A$ is the sphere and the half-spaces pass through the origin, this is a
problem of convex geometry, and Cover \cite{Cov65} showed by counting the
regions of a hyperplane arrangement that $n$ weights can store about $2n$
random patterns. The spherical perceptron, including its nonconvex variants,
has since been the subject of many works
\cite{ST03,Sto13a,Tal11,Sto13b,FP16,FPSUZ17,AS22,MZZ24}. In this paper we
consider the binary perceptron, in which $A$ is the discrete hypercube
$\{-1,+1\}^n$ and the solutions are the vertices of the hypercube that lie
in a random polyhedron. Convexity is then lost, and the model becomes a random
constraint satisfaction problem in which every constraint involves all $n$
variables. Gardner \cite{Gar87,Gar88} initiated the study of such solution
spaces with the replica method of statistical physics \cite{MPV87}, and for
binary weights Krauth and M\'ezard \cite{KM89} used this method to predict
the storage capacity. Their analysis also points to a frozen structure of the
solution space, which is the subject of the present paper.

Over the past decades, the binary perceptron has become a central model well
beyond the theory of neural networks. It has been studied extensively in
statistical physics and machine learning
\cite{BZ06,BBBZ07,HWK13,HK14,BILSZ15,BBCILSZ16} and in probability theory
\cite{KR98,Tal99,Tal00,Tal11,Xu21,BNSX22,NS23,ALS21,PX24,DS25,Hua24}. In
theoretical computer science, it is closely related to discrepancy
minimization \cite{Spe85,Ban10,LM15,BS20,ALS21b,GKPX23}, which in turn has
applications to causal inference \cite{TMR20,HSSZ24}. It also serves as a
testing ground for questions about algorithms and computational complexity
\cite{ALS22,GKPX22,GKPX23,LSZ25,VV25,AG25,GHLS26,JL26}. At the same time, the
binary perceptron is notoriously difficult to analyze rigorously. Even its
storage capacity at zero margin, predicted by Krauth and M\'ezard in 1989, was
established only recently, after more than three decades of work
\cite{DS25,Hua24,Shm26}.

\subsection{Perceptron models}

Fix $\alpha>0$ and a Borel set $K\subseteq\mathbb R$, and let $m=\lfloor\alpha n\rfloor$. Let $\nu$ be a probability measure on $\mathbb R$ which has zero mean and unit variance, and let
the disorder $G\in\mathbb R^{m\times n}$ have i.i.d.\ entries $g_{ai}$ with law $\nu$, so that
\begin{equation}\label{eq:moments}
 \EE g_{11}=0,\qquad \EE g_{11}^2=1 .
\end{equation}
The binary perceptron model with activation function $\mathbf{1}_K$ has solution set
\begin{equation}
    \label{eq:model}
    S(G)=\big\{\sigma\in\{+1,-1\}^n:\, \langle g_a,\sigma\rangle/\sqrt{n}\in K\text{ for }a=1,\dots,m\big\}\,,\quad Z(G)=|S(G)|\,,
\end{equation}
where $g_a$ denotes the rows of matrix $G$ and $\langle\cdot,\cdot\rangle$ denotes the Euclidean inner product. $Z(G)$ counts the number of solutions, also called the partition function. We call $\alpha$, the number of constraints per dimension, the \emph{constraint density}. 

The only assumption we make on $K$ is that the activation function
$\mathbf{1}_K$ has a jump discontinuity, a condition we make precise in
\eqref{eq:boundary} below. This covers the two standard perceptron models.
For $K=[\kappa,\infty)$ with $\kappa\in\mathbb R$, each constraint confines
$\sigma$ to the half-space $\{x\in\mathbb R^n:\langle g_a,x\rangle\geq\kappa\sqrt n\}$,
so $S(G)$ is the intersection of the hypercube with $m$ independent random
half-spaces, as at the beginning of the introduction. This is the
\emph{asymmetric binary perceptron} (ABP) with margin $\kappa$, the original
binary perceptron studied in \cite{GD88,KM89,KR98,Tal99}. For
$K=[-\kappa,\kappa]$ with $\kappa>0$, each half-space is replaced by a
symmetric slab, and the model is the \emph{symmetric binary perceptron} (SBP)
introduced by Aubin, Perkins and Zdeborov\'a \cite{APZ19}. In both models
$\mathbf{1}_K$ jumps at $\kappa$. In general, since $\mathbf{1}_K$ takes only the values
$0$ and $1$, a jump discontinuity of $\mathbf{1}_K$ at a point $\kappa$ means
that there exists $\eta>0$ such that one of the intervals
$(\kappa-\eta,\kappa)$ and $(\kappa,\kappa+\eta)$ is contained in $K$ and the
other is disjoint from $K$:
\begin{equation}\label{eq:boundary}
 \begin{gathered}
 (\kappa-\eta,\kappa)\subseteq K\quad\text{and}\quad
 (\kappa,\kappa+\eta)\cap K=\varnothing,\\
 \text{or}\\
 (\kappa-\eta,\kappa)\cap K=\varnothing\quad\text{and}\quad
 (\kappa,\kappa+\eta)\subseteq K .
 \end{gathered}
\end{equation}
Condition \eqref{eq:boundary} holds for every finite union of intervals for which
both $K$ and $\mathbb R\setminus K$ have positive Lebesgue measure. It also
holds for a union of infinitely many open intervals as soon as one of them
has an endpoint at positive distance from the others, as for
$K=\bigcup_{\ell\in\mathbb Z}(2\ell,2\ell+1)$.

A central question for perceptron models is the storage capacity, which
asks how many random patterns the perceptron can store, that is, how large
$m$ can be relative to $n$ while the solution set $S(G)$ is still nonempty.
In the proportional regime, one expects a
critical density $\alpha_\star$ such that $S(G)$ is nonempty with
high probability for $\alpha<\alpha_\star$ and empty with high
probability for $\alpha>\alpha_\star$. For the ABP with margin $\kappa$, Krauth and
M\'ezard \cite{KM89} used the replica method to predict an explicit value of
$\alpha_\star=\alpha_\star(\kappa)$, for instance $\alpha_\star(0)\approx0.833$. Unlike
the spherical perceptron, whose storage capacity was determined rigorously
for every $\kappa\geq0$ by exploiting convexity \cite{ST03,Sto13a}, the ABP
is a nonconvex problem, and proving the prediction of Krauth and M\'ezard
turned out to be far more difficult. At
$\kappa=0$ the first moment method gives only $\alpha_\star(0)\leq1$, and
even improving on this trivial bound required substantial work. Kim and
Roche \cite{KR98} and Talagrand \cite{Tal99} showed that there is
$\varepsilon>0$ such that $S(G)$ is nonempty with high probability for
$\alpha<\varepsilon$ and empty with high probability for
$\alpha>1-\varepsilon$. Using a rigorous version of the cavity method,
Talagrand \cite{Tal00,Tal11} also verified the replica-symmetric formula for
the limit of $n^{-1}\log Z(G)$ at small densities. Nevertheless, the
prediction of Krauth and M\'ezard remained out of reach for decades. A
breakthrough came with the work of Ding and Sun \cite{DS25}, who proved that
this prediction is a lower bound for the storage capacity at $\kappa=0$.
This was followed by a series of further works
\cite{Xu21,BNSX22,NS23,AT24,Nak25}, including sharp threshold results
\cite{Xu21,NS23} and the improved upper bound $0.847$ at $\kappa=0$
\cite{AT24}, and Huang
\cite{Hua24} finally proved the matching upper bound, which confirms the
prediction of Krauth and M\'ezard at $\kappa=0$ \cite{DS25,Hua24,Shm26}.

When $K=-K$, as for the SBP, the problem is more
tractable. The solution set is then invariant under
$\sigma\mapsto-\sigma$, so its barycenter is zero, and the first and second moment methods determine the storage
capacity of the SBP, which is given by the first moment computation
\cite{APZ19,PX24,ALS21}. For the ABP, in contrast, the barycenter of $S(G)$
is expected to be macroscopic and random, and $Z(G)$ is expected to be
exponentially smaller than $\EE Z(G)$ at every density, so the moment
method fails \cite{Hua24}. This is why the first moment bound
$\alpha_\star(0)\leq1$ is far from the truth, and why the proofs in
\cite{DS25,Hua24} rely on conditioning on a proxy for the barycenter,
motivated by the TAP equations from statistical physics. The symmetry also
allows a more refined analysis of the SBP than is currently available for
the ABP, including a lognormal limit law for the number of solutions
\cite{ALS21}, a precise description of the critical window
\cite{Alt23,SS23}, and results on the performance of algorithms
\cite{ALS22,GKPX22,GKPX23,BAKZ24}.

\subsection{Strong freezing}
\label{sec:strong-freezing}

Beyond the storage capacity, one of the most striking predictions of
statistical physics for the binary perceptron concerns the geometry of its
solution space. Using the replica method, Krauth and M\'ezard \cite{KM89}
predicted that, for the ABP at every density below the storage capacity,
typical solutions lie in clusters of $2^{o(n)}$ solutions and distinct
clusters are separated by linear Hamming distances, a structure known in
physics as frozen one-step replica symmetry breaking. Huang, Wong and Kabashima \cite{HWK13} and Huang
and Kabashima \cite{HK14} sharpened this prediction. They computed the
number of solutions at a given Hamming distance from a typical solution,
and concluded that a typical solution has no other solution within Hamming
distance of order $n$ (see \cref{fig:strong_freezing}). The cluster of a typical solution then consists of
the solution alone, and all of its coordinates are frozen. We refer to this
property as \emph{strong freezing}; see also \cite{ZM08a,ZM08b,ZK11} for
related phenomena in other random constraint satisfaction problems.
Strong freezing is predicted to hold at every positive density, in sharp
contrast with sparse random constraint satisfaction problems such as random
$k$-SAT and the coloring of random graphs, in which clustering and freezing
emerge only at densities bounded away from zero
\cite{MMZ05,ART06,KMRSZ07,ZK07,AC08,Mol18}. Strong freezing is also
closely tied to algorithmic tractability, which we discuss in more detail
in~\cref{sec:algorithms}. 
\begin{figure}
    \centering
    \definecolor{sfred}{RGB}{190,40,40}%
\begin{tikzpicture}[
  font=\small,
  disk/.style={line width=0.6pt},
  ball/.style={draw=sfred, fill=sfred!7, line width=0.6pt, dash pattern=on 2pt off 1.5pt},
  brace/.style={decorate, decoration={brace, amplitude=4pt}, line width=0.5pt},
  title/.style={midway, above=5pt, text height=1.7ex, text depth=0.4ex},
  axlabel/.style={below, text height=1.6ex, text depth=0.5ex},
]
\begin{scope}[shift={(1.70,0)}]
  \draw[disk] (0,0) circle (1.4);
  \path[ball] (0.10,-0.12) circle (0.28);
  \foreach \x/\y in {-0.26/-0.25, -0.04/1.11, -0.85/0.28, 0.25/-0.99, -1.06/0.48, 1.11/0.01, -0.15/-0.83, 0.57/-0.20, -0.90/0.84, -0.52/0.28, 0.07/0.61, 0.68/0.09, -1.09/-0.40, 0.40/0.94, 0.81/-0.98, -0.22/0.38, -0.56/-1.01, -0.60/-0.37, -0.72/0.57, -1.05/-0.70, 0.87/-0.27, -0.07/-1.19, -0.43/0.62, -0.67/-0.65, 0.35/-0.54, 0.90/0.52, -0.50/-0.09, -1.13/-0.12, -0.68/1.02, 0.18/0.32, -0.33/1.16, 0.96/0.81, 0.95/-0.63, 0.49/0.58, 0.65/-0.63, 0.05/-0.50}
    \fill (\x,\y) circle (1.25pt);
  \fill[sfred] (0.10,-0.12) circle (1.6pt);
\end{scope}
\begin{scope}[shift={(5.25,0)}]
  \draw[disk] (0,0) circle (1.4);
  \path[ball] (-0.18,0.10) circle (0.42);
  \foreach \x/\y in {0.25/0.40, 1.18/-0.40, -0.10/-1.16, -0.37/0.97, -0.77/-0.61, 0.53/0.81, -0.65/0.52, -1.19/-0.34, 0.61/-0.49, 0.61/0.09, 0.35/-1.15, -1.17/0.29, 0.89/0.46, 0.04/1.17, -0.25/-0.47}
    \fill (\x,\y) circle (1.25pt);
  \fill[sfred] (-0.18,0.10) circle (1.6pt);
\end{scope}
\begin{scope}[shift={(8.80,0)}]
  \draw[disk] (0,0) circle (1.4);
  \path[ball] (-0.22,0.05) circle (0.60);
  \draw[sfred, line width=0.5pt] (-0.22,0.05) -- (0.29,-0.27);
  \node[sfred, font=\footnotesize] at (0.50,-0.40) {$\delta n$};
  \node[sfred, font=\footnotesize] at (-0.42,0.22) {$\sigma$};
  \foreach \x/\y in {-0.76/0.50, 0.92/0.40, 0.68/-0.75, -0.05/-0.92, -0.93/-0.60, -0.32/0.95, 0.54/0.89}
    \fill (\x,\y) circle (1.25pt);
  \fill[sfred] (-0.22,0.05) circle (1.6pt);
\end{scope}
\begin{scope}[shift={(13.80,0)}]
  \draw[disk, draw=black!40] (0,0) circle (1.4);
  \node[black!55] at (0,0) {$S(G)=\emptyset$};
\end{scope}
\draw[line width=0.6pt, -{Stealth[length=2.2mm]}] (0,-2.05) -- (15.95,-2.05);
\node[axlabel] at (0,-2.05) {$0$};
\node[axlabel] at (15.95,-2.05) {$\alpha$};
\draw[dashed, line width=0.5pt] (11.3,-2.05) -- (11.3,1.87);
\node[axlabel] at (11.3,-2.05) {$\alpha_\star(\kappa)$};
\draw[brace] (0,1.62) -- (11.22,1.62) node[title] {Strong freezing};
\draw[brace] (11.38,1.62) -- (15.95,1.62) node[title] {UNSAT};
\end{tikzpicture}%
    \caption{The solution space of the ABP as predicted in~\cite{HWK13,HK14} (adapted from~\cite[Fig.~1(c)]{HK14}). Each disk represents the hypercube $\{-1,+1\}^n$ and each dot a solution. For every $0<\alpha<\alpha_\star(\kappa)$, a typical solution $\sigma$ (red) has no other solution within Hamming distance $\delta n$ (dashed circle), where $\delta=\delta(\alpha,\kappa)>0$; beyond the satisfiability threshold $\alpha_\star(\kappa)$ there are no solutions with high probability. \Cref{thm:main} confirms the strong freezing part of this picture for every $\alpha>0$ and $\kappa\in\mathbb R$.}
    \label{fig:strong_freezing}
\end{figure}

Proving strong freezing rigorously is a challenging problem. As recalled
in the previous subsection, even determining the storage capacity of the
ABP has required decades of work, and strong freezing calls for a much
finer understanding of the solution space. The standard route to
properties of a uniformly random solution passes through the planted model
\cite{AC08,KZ09}, in which $\sigma$ is drawn uniformly from
$\{-1,+1\}^n$ and $G$ is then conditioned on $\sigma\in S(G)$. Under the
planted law, the isolation of the planted solution can be established by a
first moment computation. The difficulty lies in transferring this
information to a uniformly random solution of a typical instance. Since
the law of $G$ under the planted model has density $Z(G)/\EE Z(G)$ with
respect to its original law, such a transfer requires showing that $Z(G)$
is typically not much smaller than $\EE Z(G)$, which amounts to a
fine-grained control of the fluctuations of the partition function.

For this reason, rigorous results on strong freezing have so far been
limited to symmetric models such as the SBP, whose partition function is
much better behaved thanks to the symmetry. Strong freezing for the SBP was
proved at every density below the storage capacity along two different
routes. Perkins and Xu \cite{PX24} showed, by an inductive argument in
which the constraints are added one at a time, that $\log Z(G)$ is
concentrated around $\log\EE Z(G)$ on the scale $\log n$, which suffices
to transfer the isolation of the planted solution to the original law.
Abbe, Li and Sly
\cite{ALS21} proved strong freezing through contiguity. They showed that
$Z(G)/\EE Z(G)$ converges in distribution to a lognormal limit, by
developing a dense analogue of the small subgraph conditioning method
\cite{RW94,Jan95}, and deduced that the planted and original laws are
mutually contiguous, as conjectured in \cite{APZ19}, in the sense that every event of
vanishing probability under the planted law also has vanishing probability
under the original law. Strong freezing then follows directly from the
isolation of the planted solution.

For asymmetric $K$, strong freezing has remained open, even for the ABP,
which is the model of the original predictions \cite{KM89,HWK13,HK14} and
for which strong freezing is widely believed \cite[Section~1.1]{GHLS26}.
Both approaches above break down for the ABP, as anticipated in
\cite{PX24}. Indeed, $Z(G)$ is expected
to be exponentially smaller than $\EE Z(G)$, and the planted law is not
even contiguous to the original law at any density $\alpha>0$, since
conditioning on $\sigma\in S(G)$ shifts the mean of every row $g_j$ by a
positive multiple of $\sigma/\sqrt n$, which can be detected from
$\norm{\sum_jg_j}$.

In this paper we establish strong freezing for general perceptron models,
with no symmetry assumption on $K$ and for general disorder, and in
particular for the ABP at every margin. Instead of comparing the planted and
original laws, we replace the number of constraints by a Poisson variable,
so that the numbers of constraints cutting out different subsets of the
hypercube become independent. Since each constraint can only remove
solutions, the FKG inequality then decouples the partition function from
the event that two nearby configurations are both solutions, and the
fluctuations of $Z(G)$ never enter the argument (see \cref{sec:outline}).

To state strong freezing precisely, let
$d_H(\sigma,\tau)=|\{i:\sigma_i\neq\tau_i\}|$ be the Hamming distance.
For an integer $r\geq1$, the solutions that have another solution within
distance $r$ form the set
\[
 A_r(G)=\bigl\{\sigma\in S(G):\ 1\leq d_H(\sigma,\tau)\leq r
        \text{ for some }\tau\in S(G)\bigr\},
\]
and a solution outside $A_r(G)$ is called isolated up to distance $r$. The
fraction of solutions that are not isolated is
\[
 R_r(G)=\frac{|A_r(G)|}{Z(G)}\quad\text{if }Z(G)>0,
 \qquad R_r(G)=0\quad\text{if }Z(G)=0,
\]
which, when $Z(G)>0$, is the conditional probability given $G$ that a
uniformly random solution is not isolated up to distance $r$. Strong
freezing at density $\alpha$ means that $R_{\lfloor\delta n\rfloor}(G)\to0$
in probability for some $\delta>0$.

Our main result proves strong freezing for general perceptron models at every density $\alpha>0$.

\begin{theorem}\label{thm:main}
Assume \eqref{eq:moments} and \eqref{eq:boundary}. Then for every $\alpha>0$ there exist $\delta\in(0,1/4]$, $c>0$ and $n_0\in\mathbb N$, depending only on $\nu$, $\alpha$ and $K$, such that for all $n\geq n_0$
\[
 \EE R_{\lfloor\delta n\rfloor}(G)\leq e^{-2c\sqrt n}
 \qquad\text{and}\qquad
 \PP\bigl(R_{\lfloor\delta n\rfloor}(G)>e^{-c\sqrt n}\bigr)
 \leq e^{-c\sqrt n}.
\]
\end{theorem}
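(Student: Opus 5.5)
The plan is to Poissonize the number of constraints, decouple the partition function from local events with the FKG (Harris) inequality, and bound the local events by a first-moment computation.

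\emph{Poissonization.} Let $N\sim\Pois(\lambda n)$ with $\lambda$ slightly larger than $\alpha$. Generate $N$ i.i.d.\ rows $g_a$ with law $\nu^{\otimes n}$. Each row defines a random subset $B_a=\{\sigma:\ip{g_a}{\sigma}/\sqrt n\notin K\}$ of the hypercube, namely the configurations it removes. By Poisson thinning, for each subset $B\subseteq\{-1,+1\}^n$ the number $N_B$ of rows with $B_a=B$ is an independent Poisson variable. The solution set is $S=\{\sigma:\ N_B=0 \text{ for all } B\ni\sigma\}$. Since $\PP(N\ge m)\ge 1-e^{-cn}$ and adding constraints only shrinks $S$, a coupling argument transfers bounds to exactly $m$ rows. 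Some care is needed because $R_r$ is not monotone under adding rows. One can instead bound $\EE|A_r|/Z$ through each pair $(\sigma,\tau)$ with slightly different densities, or condition on $N=m'$ and average over $m'$ in a window of width $\sqrt n$ around $m$. The $\sqrt n$ window is plausibly the source of the $e^{-c\sqrt n}$ rate: local Poisson fluctuations on that scale have probability $e^{-O(1)}$ per unit rescaling, so conditioning costs at most a factor $e^{O(\sqrt n)}$. I would make this precise by comparing $\Pois(\lambda n)$ conditioned on $[m,m+\sqrt n]$ with fixed $m$ via monotonicity in the number of rows.

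\emph{Decoupling.} Write
\[
\EE R_r=\sum_\sigma\EE\Big[\frac{\ind\{\sigma\in A_r\}}{Z}\Big]\le\sum_\sigma\sum_{1\le d_H(\sigma,\tau)\le r}\EE\Big[\frac{\ind\{\sigma,\tau\in S\}}{Z}\Big].
\]
In the independent Poisson field $(N_B)$, the indicator $\ind\{\sigma,\tau\in S\}$ is decreasing, while $1/Z$ is increasing since $Z$ decreases as the $N_B$ grow. Harris/FKG would give the reverse of what we need. So instead, condition on $\sigma,\tau\in S$. This conditioning only kills $N_B$ for $B\ni\sigma$ or $B\ni\tau$, and leaves the other Poisson variables unchanged. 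Under it, $Z\ge Z'$, where $Z'$ counts solutions using only the constraints with $B\not\ni\sigma,\tau$, that is, a thinned field. Hence
\[
\EE\big[\ind\{\sigma,\tau\in S\}/Z\big]\le \PP(\sigma,\tau\in S)\,\EE[1/\max(Z'',1)],
\]
where $Z''$ is independent of the event. Comparing $Z''$ with the typical $Z$ requires a lower-tail bound for $Z$. A cleaner route is the pair-symmetrization: $\EE[\ind\{\sigma,\tau\in S\}/Z]=\EE[\ind\{\sigma\in S\}/Z\cdot \PP_{\text{thin}}(\tau\in S\mid\cdot)]$. Here one uses that, given $\sigma\in S$, the extra event $\tau\in S$ requires $N_B=0$ for $B\ni\tau$, $B\not\ni\sigma$. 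These are independent of the rest, and their mean is $\lambda n\,p(\sigma,\tau)$, where $p$ is the probability that one row separates $\tau$ from $\sigma$. Removing those constraints can only increase $Z$, and FKG applied to the remaining increasing/decreasing structure gives
\[
\EE\big[\ind\{\sigma,\tau\in S\}/Z\big]\le e^{-\lambda n p(\sigma,\tau)}\,\EE\big[\ind\{\sigma\in S\}/Z\big].
\]
Summing over $\sigma$ yields $\EE R_r\le\sum_{k\le r}\binom nk\max_{d_H=k} e^{-\lambda n p}$.

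\emph{Local estimate (main obstacle).} We need $p(\sigma,\tau)\ge c\sqrt{k/n}$ for $d_H=k\le\delta n$. The difference $\ip{g}{\sigma-\tau}/\sqrt n$ has variance $4k/n$, and by a Berry--Esseen/anticoncentration argument with general $\nu$ and \eqref{eq:boundary}, the event that $\ip g\sigma/\sqrt n$ lies within $\sqrt{k/n}$ of $\kappa$ on the $K$ side while the shift pushes $\ip g\tau/\sqrt n$ across $\kappa$ has probability $\gtrsim\sqrt{k/n}$. Then $\binom nk e^{-c\lambda\sqrt{kn}}\le e^{k\log(en/k)-c\sqrt{kn}}$. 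This is at most $e^{-c'\sqrt{kn}}$ for $k\le\delta n$ with $\delta$ small, and summing gives $e^{-2c\sqrt n}$, dominated by $k=1$. The hard part is uniform anticoncentration for non-smooth $\nu$ at small $k$ (for example $k=1$ with Rademacher entries). There, one uses that $\ip{g}{\sigma}/\sqrt n$ is nearly Gaussian on scales $\gg1/\sqrt n$ via Berry--Esseen, so landing in a window of width $\asymp 1/\sqrt n$ next to $\kappa$ has probability $\gtrsim 1/\sqrt n$ at lattice-free scale, after averaging appropriately. Finally, Markov's inequality gives the probability bound from $\EE R\le e^{-2c\sqrt n}$.
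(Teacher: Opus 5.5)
Your overall route is the paper's: Poissonize, condition on $\sigma\in S$, apply Harris to the decreasing $\ind\{\tau\in S\}$ and the increasing $1/Z$ in the remaining independent counts, sum using $\sum_\sigma\EE[\ind\{\sigma\in S\}/Z]\le1$, beat $\binom nk$ with a separation bound of order $\sqrt{k/n}$, and finish with Markov. Your conditional FKG inequality is correct. Three steps, however, do not go through as written.

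\emph{(i) De-Poissonization.} $R_r$ is not monotone in the number of rows, so neither $\PP(N\ge m)\ge1-e^{-cn}$ plus a coupling nor averaging over $m'$ in a window gives a bound at exactly $m$ rows. Moreover, with mean $\lambda n$ for fixed $\lambda>\alpha$ one has $\PP(N=m)=e^{-\Theta(n)}$, which would swamp $e^{-c\sqrt n}$. The fix is to take the Poisson mean equal to $m$ (your Poisson-level bound holds for any mean) and use only $R_r\ge0$: $\EE R_r(S_m)=\EE[R_r(S_N)\mid N=m]\le\EE R_r(S_N)/\PP(N=m)\le3\sqrt m\,\EE R_r(S_N)$. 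This cost is polynomial. The $e^{-c\sqrt n}$ rate comes from the $k=1$ terms, not from a $\sqrt n$ window.

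\emph{(ii) Missing symmetrization.} You keep the one-sided exponent $p(\sigma,\tau)=\PP(\text{one row is satisfied by }\sigma\text{ but not by }\tau)$. Since your final bound has a maximum over $\sigma$, you need $p(\sigma,\tau)\ge c\sqrt{k/n}$ for every pair. This is false for asymmetric lattice laws. Take $K=[0,\infty)$ and $X=h/3$ with probability $2/3$, $X=-2h/3$ with probability $1/3$, where $h=3/\sqrt2$, so the mean is $0$ and the variance is $1$. Let $\sigma_1=1$ and $\sum_{i\ge2}\sigma_i\equiv1\pmod 3$, and let $\tau$ flip the first coordinate. Then the event forces $X_1=h/3$ and $U=\sum_{i\ge2}\sigma_iX_i\in[-h/3,h/3)$, whereas $U\in h/3+h\mathbb Z$ almost surely. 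So $p(\sigma,\tau)=0$, and your bound is at least $1$. The missing step uses $q_{\sigma,\tau}=\EE[\ind\{\sigma,\tau\in S\}/Z]=q_{\tau,\sigma}$. With $w_\sigma=\EE[\ind\{\sigma\in S\}/Z]$, combining the two one-sided bounds gives $q_{\sigma,\tau}\le e^{-\mu\Delta/2}\sqrt{w_\sigma w_\tau}\le\frac12e^{-\mu\Delta/2}(w_\sigma+w_\tau)$ for Poisson mean $\mu$. Here $\Delta=p(\sigma,\tau)+p(\tau,\sigma)$ depends on $V=\sum_{i:\sigma_i\ne\tau_i}\sigma_iX_i$ only through $|V|$.

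\emph{(iii) The local estimate.} You correctly flag this as the crux, but Berry--Esseen cannot supply it. It needs a third moment, which is not assumed. Even with one, its error $O(n^{-1/2})$ is of the same order as the target probability when $k=O(1)$, so no lower bound follows; the relevant window has normalized width exactly of order $n^{-1/2}$, not $\gg n^{-1/2}$. The paper uses the local limit theorem instead, in several steps.
\begin{itemize}
\item Because the $\sigma_iX_i$ are not identically distributed when $\nu$ is asymmetric, it writes $U=U_J+W$. Here $J$ is a block of at least $3n/8$ agreeing coordinates on which $\sigma$ is constant, so $U_J$ is an i.i.d.\ sum of copies of $X$ or of $-X$, and $|W|\le\sqrt n$ with probability at least $1/2$ by Chebyshev.
\item The LCLT (nonlattice, or lattice with span $h$) gives $\PP(U_J\in[x,x+\iota))\gtrsim\iota/\sqrt n$, uniformly for such intervals with $|x|=O(\sqrt n)$, where $\iota=h$ or $\iota=1$.
\item Separately, $a\sqrt k\le|V|\le b\sqrt k$ and $|V|\ge\iota/2$ hold with probability at least $\rho>0$, uniformly in $k$ and the signs. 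This uses Lindeberg for large $k$ and finitely many nondegenerate laws for small $k$.
\item On this event the window $[\kappa\sqrt n-W-|V|,\kappa\sqrt n-W+|V|)$ contains at least $|V|/\iota$ disjoint cells of length $\iota$, giving $\Delta\gtrsim\sqrt{k/n}$ for half-lines. The condition $|V|\ge\iota/2$ is exactly what the one-sided quantity in (ii) cannot guarantee.
\item For general $K$, one compares with the half-line agreeing with $K$ near the jump, losing $\PP(2|V|\ge\eta\sqrt n)\le4k/(\eta^2n)$. This loss is absorbed for $k\le\delta_0 n$ with $\delta_0$ small.
\end{itemize}
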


\Cref{thm:main} establishes strong freezing at every density for every set
$K$ satisfying \eqref{eq:boundary} and every disorder law $\nu$ satisfying
\eqref{eq:moments}. In particular, it covers the ABP at every margin
$\kappa\in\mathbb R$ and the SBP at every margin $\kappa>0$, for Gaussian,
Rademacher and any other centered disorder with unit variance. With
probability at least $1-e^{-c\sqrt n}$, all but an $e^{-c\sqrt n}$ fraction of
the solutions have no other solution within Hamming distance
$\lfloor\delta n\rfloor$. Thus, if two solutions are declared adjacent when
their Hamming distance is at most $\delta n$, the connected component, or
cluster, of a typical solution consists of this solution alone, and all of its
coordinates are frozen, in the sense that reversing any nonempty set of at
most $\delta n$ of them violates some constraint. For the ABP, this confirms the prediction of \cite{HWK13,HK14} illustrated in
\cref{fig:strong_freezing}, which sharpens the frozen picture of Krauth and
M\'ezard \cite{KM89}.
For the SBP, it recovers the strong freezing results of
\cite{PX24,ALS21}. Since \cref{thm:main} holds for every $\alpha>0$, it covers
every density at which solutions exist, and it requires no information on the
storage capacity.

We note that the fraction of non-isolated solutions in \cref{thm:main} is
stretched exponentially small, namely at most $e^{-c\sqrt n}$, rather than
exponentially small. This rate is governed by solutions with very close
neighbors, and it is natural. Let $\kappa$ be as in \eqref{eq:boundary}. For a
solution $\sigma$, order $\sqrt n$ of the $m$ constraints are nearly tight, in
the sense that $\ip{g_j}{\sigma}$ lies within $O(1)$ of $\kappa\sqrt n$.
Reversing a single coordinate changes each $\ip{g_j}{\sigma}$ by an amount of
order one, and therefore violates each nearly tight constraint with
probability bounded away from zero. A neighbor of $\sigma$ at distance one is
thus a solution only with probability $e^{-\Theta(\sqrt n)}$. In our argument,
this is the contribution of $k=1$ in \eqref{eq:sumk}, which suggests that the
rate $e^{-c\sqrt n}$ in \cref{thm:main} is of the correct order.

At linear distances, in contrast, our argument yields an exponential bound.
For every fixed $\theta\in(0,\delta)$, restricting the sum in \eqref{eq:sumk}
to $k\ge\theta n$ and arguing as in the proof of \cref{thm:main} shows that,
with probability $1-e^{-\Omega(n)}$, all but an $e^{-\Omega(n)}$ fraction of
the solutions have no other solution at Hamming distance between $\theta n$
and $\delta n$, where the implicit constants also depend on $\theta$.
Consequently, the solutions that have exponentially many other solutions
within distance $\delta n$ form an exponentially small fraction of $S(G)$, in
line with the prediction that the dense regions of solutions found in
\cite{BILSZ15} are exponentially rare.

\subsection{Outline of the proof}\label{sec:outline}

Every solution in $A_r(G)$ has another solution within distance $r$, so
\[
 \EE R_r(G)\le\sum_{\sigma\in\{-1,+1\}^n}\
 \sum_{\tau:\,1\le d_H(\sigma,\tau)\le r}
 \EE\Bigl[\frac{\ind\{\sigma\in S(G),\,\tau\in S(G)\}}{Z(G)}\Bigr].
\]
The difficulty is the factor $1/Z(G)$, which is correlated with the events
$\{\sigma\in S(G)\}$ and $\{\tau\in S(G)\}$. Controlling this correlation
directly would require information on the fluctuations of $Z(G)$, which is
not available for asymmetric $K$ such as the ABP.

We view each constraint as a random subset of $\{-1,+1\}^n$. For
$g\in\mathbb R^n$, the constraint with vector $g$ is satisfied exactly by the
configurations in
\begin{equation}\label{eq:half-space-by-g}
 C(g)=\{\sigma\in\{-1,+1\}^n:\,\langle g,\sigma\rangle/\sqrt{n}\in K\},
\end{equation}
so that $S(G)=\bigcap_{a=1}^m C(g_a)$ by \eqref{eq:model}. Our key step is to replace the number $m$ of constraints by a Poisson variable
$N(\lambda)$ with mean $\lambda$. By the Poisson colouring theorem \cite{Kin93}, the
numbers of constraints $C(g_j)$ equal to each given subset $T$ of
$\{-1,+1\}^n$ are then independent Poisson variables. This extra independence
decouples the events we need. Write $S$ and $Z$ for the solution set and its size with $N(\lambda)$ constraints. The event $\{\sigma\in S\}$ only requires that no
constraint excludes $\sigma$, so it depends only on the counts of subsets $T\not\ni\sigma$,
and conditioning on it leaves the counts of subsets $T\ni\sigma$ independent. On this
event, both $\ind\{\tau\in S\}$ and $1/Z$ are functions of the independent
indicators that some constraint equals $T$, for $T\ni\sigma$. Each such
constraint can only remove solutions, so $\ind\{\tau\in S\}$ is nonincreasing
and $1/Z$ is nondecreasing in these indicators. The FKG inequality
\cite{Har60,FKG71} then shows that they are negatively correlated, which gives
\[
 \EE\Bigl[\frac{\ind\{\sigma\in S,\,\tau\in S\}}{Z}\Bigr]
 \le e^{-\lambda\PP(\sigma\in C(g_1),\,\tau\notin C(g_1))}\,
 \EE\Bigl[\frac{\ind\{\sigma\in S\}}{Z}\Bigr].
\]
Exchanging $\sigma$ and $\tau$ gives the same bound with
$\PP(\tau\in C(g_1),\,\sigma\notin C(g_1))$ in the exponent, and the two
exponents add up to $\lambda\Delta(\sigma,\tau)$, where $\Delta(\sigma,\tau)$
is the probability that a single constraint contains exactly one of $\sigma$
and $\tau$. Taking the geometric mean of the two bounds and then the
arithmetic mean of the weights, we bound each term by
$\frac12e^{-\lambda\Delta(\sigma,\tau)/2}\bigl(\EE[\ind\{\sigma\in S\}/Z]+\EE[\ind\{\tau\in S\}/Z]\bigr)$.
Since these weights sum to at most one over $\sigma$, $Z$ drops out after
summing, taking $\lambda=m$ and returning from $N$ to $m$ constraints, which costs a factor
$3\sqrt m$, leads to \eqref{eq:poimeanm}:
\[
 \EE R_r(G)\le3\sqrt m\max_{\sigma}\sum_{\tau:\,1\le d_H(\sigma,\tau)\le r}
 e^{-m\Delta(\sigma,\tau)/2}.
\]

It remains to bound $\Delta(\sigma,\tau)$ from below. Let
$k=d_H(\sigma,\tau)$. The inner products $\ip g\sigma$ and $\ip g\tau$ share the $n-k$ coordinates on which $\sigma$ and $\tau$ agree, and they differ by a sum of $k$ terms, which is of order $\sqrt k$. For the two inner products to fall on different sides of the boundary of $K$, their common part, which is spread over a scale of $\sqrt n$, must land in a window of width of order $\sqrt k$. This has probability of order $\sqrt{k/n}$, which is the lower bound $\Delta(\sigma,\tau)\ge\beta\sqrt{k/n}$ of \cref{lem:comparison} when $k/n$ is small. We make this heuristic rigorous in \cref{sec:proof}. Plugging this bound into \eqref{eq:poimeanm}, each $\tau$ at distance $k$ contributes at most $e^{-c\sqrt{nk}}$, which beats the $\binom nk$ choices of $\tau$ when $k/n$ is small, and summing over $k\le\delta n$ proves \cref{thm:main}.

\subsection{Discussion and related work}
\paragraph{Freezing in random constraint satisfaction problems.}
The geometry of the solution space of random constraint satisfaction
problems has been studied extensively in statistical physics by means of
the cavity method \cite{MPZ02,MZ02,MRZ03,MM09}. As the density of constraints
increases, the solution space of sparse problems such as random $k$-SAT
and the coloring of random graphs is predicted to undergo several phase
transitions before the satisfiability threshold
\cite{MMZ05,KMRSZ07,ZK07,MRS08}: it shatters into exponentially many
clusters separated by linear Hamming distances, the uniform measure on
solutions condenses onto a small number of these clusters, and a linear
fraction of the variables becomes frozen in typical clusters, in the
sense that these variables take the same value at every solution of the
cluster \cite{Sem08}. Much of this picture has been confirmed rigorously,
including clustering and the existence of frozen variables in random
$k$-SAT, hypergraph $2$-coloring and graph coloring when $k$ is large
\cite{ART06,AC08} and in random XORSAT \cite{CDMM03,IKKM15,AcMo15},
the exact location of the freezing threshold for the coloring of random
graphs and for random NAE-SAT and hypergraph $2$-coloring
\cite{Mol18,MR13}, and the condensation and satisfiability thresholds
predicted by the cavity method \cite{BCHRV16,DSS16nae,DSS22}. In all of
these problems, however, freezing is partial and appears only at densities
bounded away from zero. At small densities the solution space is well
connected \cite{AC08}, and at every density a linear number of variables
are not involved in any constraint, so that every cluster contains
exponentially many solutions \cite[Remark~3]{AC08}.

Strong freezing, in which the cluster of a typical solution consists of
that solution alone, is much rarer than the partial freezing described
above. The closest notion in physics is frozen one-step replica symmetry
breaking, in which clusters have vanishing entropy density; its prototype
is the low-temperature phase of the random energy model
\cite{Der80,Der81}, and frozen glass phases of this kind were also found
in the random multi-index matching problem \cite{MMR04,MMR05}. Among random constraint
satisfaction problems, the main examples are the locked problems of
Zdeborov\'a and M\'ezard \cite{ZM08a,ZM08b}, in which every variable lies
in at least two constraints and changing a single variable of a satisfied
constraint violates it. In their clustered phase, clusters are predicted to
be isolated points, but this phase starts only at a positive density,
below which solutions are expected to be separated by Hamming distances of
logarithmic order \cite{ZM08b}. Isolation has also been proved in problems
with algebraic or arithmetic structure. In random regular XORSAT, where
every variable lies in at least three equations, the solutions form a coset
of a low-density parity-check code whose minimum distance is linear
\cite{Gal62}, and in the number partitioning problem, the near-optimal
solutions at exponentially small energies are isolated \cite{GK23} and
behave like the configurations of a random energy model
\cite{Mer00,BCMN09a,BCMN09b}. In binary perceptrons, in contrast, typical
solutions are predicted to be isolated at every positive density
\cite{KM89,HWK13,HK14,APZ19}, as proved for the SBP \cite{PX24,ALS21} and,
by \cref{thm:main}, for general perceptron models including the ABP. This reflects the dense structure of the
constraints: every constraint involves all the variables, so flipping even
a single coordinate perturbs every constraint, including the order
$\sqrt n$ nearly tight ones (see the discussion after \cref{thm:main}),
whereas in sparse problems each variable appears in a bounded number of
constraints.

\paragraph{Strong freezing and algorithms.}\label{sec:algorithms}
Strong freezing does not prevent efficient algorithms from finding
solutions of the perceptron, in contrast with the heuristic picture for
many of the problems discussed above. In those problems, heuristic
arguments suggest that freezing leads to algorithmic hardness: in locked
problems the best known algorithms fail throughout the clustered phase
\cite{ZM08a,ZM08b}, for the coloring of random graphs and random NAE-SAT
the freezing threshold has been suggested to be a precise algorithmic
barrier \cite{ZK07,Mol18,MR13}, and refined versions of this heuristic,
which take into account that efficient algorithms can find atypical
unfrozen solutions, point to the freezing of all solutions as the ultimate
barrier \cite{MPR16,BDSZ16,BRS19}. A notable exception is random XORSAT,
which is solved by Gaussian elimination even when its solutions are
frozen, although several classes of sequential local algorithms provably
fail beyond its clustering threshold, where freezing sets in
\cite{Yun24}. For the binary perceptron, Huang and Kabashima \cite{HK14}
likewise attributed the computational hardness to the isolation of
typical solutions. Nevertheless, efficient algorithms find solutions of
the ABP at positive densities, as observed empirically
\cite{BZ06,BBBZ07,Bal09,BB15} and proved rigorously at small densities
\cite{KR98,ALS22,LSZ25,JL26}, although by \cref{thm:main} typical
solutions are isolated at every density. This makes the perceptron
unusual among random constraint satisfaction problems \cite{Gam21}.

The explanation proposed in statistical physics is that efficient
algorithms do not find typical solutions. Instead, the isolated typical
solutions coexist with rare dense regions of solutions, and efficient
algorithms are attracted to these regions \cite{BILSZ15,BBCILSZ16}. Abbe,
Li and Sly \cite{ALS22} gave rigorous support to this picture: at small
densities there is a cluster of solutions, connected under single
coordinate flips, of almost maximal diameter, in which a multiscale
majority algorithm finds solutions. For the SBP this cluster contains an
exponentially small fraction of the solutions, and the same was expected
for the ABP. The results of \cite{ALS22} for the ABP are stated for Rademacher disorder,
which satisfies \eqref{eq:moments}, so \cref{thm:main} applies directly and
shows that any such structure is necessarily rare:
every solution in a cluster with at least two
elements lies in $A_{\lfloor\delta n\rfloor}(G)$, which with high
probability contains at most an $e^{-c\sqrt n}$ fraction of the
solutions.

For ABP with Gaussian disorder, combined with a result of Gong, Huang, Li and Sellke \cite{GHLS26},
\cref{thm:main} shows that every stable algorithm that succeeds with high
probability finds atypical solutions, for every $\alpha>0$ and
$\kappa\in\mathbb R$. Here an algorithm is stable if its output moves by
$o(\sqrt n)$ in Euclidean norm when $G$ is slightly perturbed; examples
include polynomials of degree $o(n/\log n)$ in the entries of $G$ and the
multistage majority algorithm of \cite{KR98} (see \cite{GKPX22}). By
\cite[Theorem~1.4]{GHLS26}, for every fixed $\iota\in(0,1)$, such an
algorithm outputs a solution isolated up to distance $\lfloor\iota n\rfloor$
only with probability $o(1)$; taking $\iota=\delta$, its output lies in the
rare set $A_{\lfloor\delta n\rfloor}(G)$ with probability $1-o(1)$.

These results suggest that the algorithmic threshold of the ABP is
governed by the rare well-connected structures rather than by the typical
solutions, and whether it coincides with the disappearance of these
structures remains open \cite[Question~1]{ALS22}. See
\cite{BILSZ16,BAKZ24,Bar25,Bar26,Sto25} for further studies of these rare
structures and \cite{GKPX22,GKPX23,JL26,HSS26} for rigorous limits of
restricted classes of algorithms.

\section{Poissonization and FKG inequality}\label{sec:sets}
In this section, we replace the number $m$ of constraints by a Poisson
random variable. This makes the numbers of constraints of different types
independent, and this independence allows us to decouple the events we need. 

Write $\Sigma_n=\{-1,+1\}^n$. Let $g_1,g_2,\ldots$ be independent random
vectors in $\mathbb R^n$ with independent entries of law $\nu$, so that
$g_1,\ldots,g_m$ have the joint law of the rows of $G$, and let
$C_j=C(g_j)$ for $j\ge1$, with $C(g)$ from \eqref{eq:half-space-by-g}. The sets $C_1,C_2,\ldots$ are then independent
and identically distributed random subsets of $\Sigma_n$. For $\lambda\in (0,\infty)$, we define $N(\lambda)\sim \Pois(\lambda)$ to be the Poisson variable with mean $\lambda$. For two binary vectors $\sigma,\tau\in \Sigma_n$, we write $\sigma\simr \tau$ if $ 1\leq d_{H}(\sigma,\tau)\leq r$. For any integer $M\geq 0$, we let 
\begin{equation*}
    S_M=\bigcap_{1\leq a\leq M} C_a\,,\quad Z(S_M)=|S_M|\,, \quad R_{r}(S_M)=\frac{|\{\sigma\in S_M:\sigma\simr \tau\text{ for some }\tau\in S_M\}|}{Z_M}\notag\,,
\end{equation*}
where the ratio is $0$ if $Z_M=0$.
Therefore, for the model \eqref{eq:model},
\[
 S(G)=S_m,\qquad Z(G)=|S_m|,\qquad R_r(G)=R_r(S_m).
\]
\Cref{prop:upper-bound-FKG} bounds $\EE R_r(G)$ in terms of the probability that $C_1$
contains exactly one of two given configurations. Its proof relies on the
FKG inequality \cite{FKG71} for product measures, which is due to Harris
\cite{Har60}. We recall it in the form in which we use it.

\begin{lemma}[FKG inequality \cite{Har60,FKG71}]
\label{lem:harrisineq}
Let $\xi_1,\ldots,\xi_p$ be independent real random variables with values in
an interval $I$, and let $f,h:I^p\to\mathbb R$ be bounded and measurable,
with $f$ coordinatewise nondecreasing and $h$ coordinatewise
nonincreasing. Then, with $\xi=(\xi_1,\ldots,\xi_p)$,
\[
 \EE[f(\xi)h(\xi)]\leq\EE f(\xi)\,\EE h(\xi).
\]
\end{lemma}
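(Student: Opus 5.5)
The approach is the classical Harris argument: induction on the number $p$ of variables, starting with the case $p=1$.

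\emph{Case $p=1$.} Let $\xi'$ be an independent copy of $\xi_1$. Since $f$ is nondecreasing and $h$ is nonincreasing on $I$, for all $x,y\in I$ the numbers $f(x)-f(y)$ and $h(x)-h(y)$ have opposite signs (or one of them vanishes). Hence
\[
 \bigl(f(\xi_1)-f(\xi')\bigr)\bigl(h(\xi_1)-h(\xi')\bigr)\le0
\]
pointwise. Boundedness of $f,h$ makes everything integrable. Taking expectations and using that $\xi_1$ and $\xi'$ have the same law gives $2\EE[fh]-2\EE f\,\EE h\le0$, which is the claim for $p=1$.

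\emph{Inductive step.} Assume the statement for $p-1$ variables. Condition on $\xi_p=t$. By independence, the conditional law of $(\xi_1,\dots,\xi_{p-1})$ is still the product law. The sections $f(\cdot,t)$ and $h(\cdot,t)$ are bounded, measurable and coordinatewise monotone in the same directions. The induction hypothesis (applied via Fubini) gives
\[
 \EE[f(\xi)h(\xi)\mid\xi_p]\le F(\xi_p)\,H(\xi_p),
\]
where
\[
 F(t)=\EE f(\xi_1,\dots,\xi_{p-1},t),\qquad H(t)=\EE h(\xi_1,\dots,\xi_{p-1},t).
\]
The function $F$ is nondecreasing in $t$ and $H$ is nonincreasing, because monotonicity is preserved under integration. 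Both are bounded and measurable, again by Fubini. Applying the case $p=1$ to $F,H$ and $\xi_p$ yields $\EE[FH]\le\EE F\,\EE H=\EE f\,\EE h$. Taking expectations in the conditional inequality then completes the induction.

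There is no real obstacle. The only points needing care are measurability and integrability of the sections and of $F,H$, which follow from boundedness and Fubini's theorem for the product measure. The independence of the $\xi_i$ is used exactly to ensure that conditioning on $\xi_p$ leaves the law of the remaining coordinates unchanged.
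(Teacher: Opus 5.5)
Your proof is correct. It is the classical Harris argument: the pointwise inequality $\bigl(f(\xi_1)-f(\xi')\bigr)\bigl(h(\xi_1)-h(\xi')\bigr)\le0$ handles $p=1$, and induction on $p$ proceeds by conditioning on $\xi_p$ and applying Fubini. The paper gives no proof of this lemma and simply quotes it from \cite{Har60,FKG71}, so your argument supplies the standard proof behind that citation. Your proof also never uses that $I$ is an interval, only that $I\subseteq\mathbb R$, so it applies verbatim to the $\{0,1\}$-valued indicators $\ind\{\mathsf{Count}(T)\ge1\}$ used in the proof of \cref{prop:upper-bound-FKG}.
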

Then we present our key proposition.
\begin{proposition}\label{prop:upper-bound-FKG}
    For every positive integer $m$, $r$ and every $\lambda>0$, we have
    \begin{equation}\label{eq:each-lambda}
        \mathbb ER_r(G) \leq \frac{1}{\mathbb P\big(\Pois(\lambda)=m\big)} \max_{\sigma\in \Sigma_n} \sum_{\tau:\,\sigma\simr \tau} e^{-\lambda\Delta(\sigma,\tau)/2}\,,
    \end{equation}
    where $\Delta(\sigma,\tau)=\mathbb P\big(\mathbf 1\{\sigma\in C_1\}\neq \mathbf 1\{\tau\in C_1\}\big)$. In particular, 
    \begin{equation}\label{eq:poimeanm}
        \mathbb E R_r(G)\leq 3\sqrt{m}\max_{\sigma\in \Sigma_n} \sum_{\tau:\,\sigma\simr \tau} e^{-m\Delta(\sigma,\tau)/2}\,.
    \end{equation}
\end{proposition}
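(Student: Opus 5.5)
We first pass from $m$ constraints to a Poisson number of them. Since $R_r(S_M)\ge 0$ for every $M$, and $N(\lambda)$ is independent of the sequence $(C_j)$,
\[
 \EE R_r(S_{N(\lambda)})=\sum_{M\ge0}\PP\bigl(N(\lambda)=M\bigr)\,\EE R_r(S_M)\ \ge\ \PP\bigl(\Pois(\lambda)=m\bigr)\,\EE R_r(G).
\]
It therefore suffices to show $\EE R_r(S_{N(\lambda)})\le\max_\sigma\sum_{\tau\simr\sigma}e^{-\lambda\Delta(\sigma,\tau)/2}$. Write $S=S_{N(\lambda)}$ and $Z=|S|$. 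By the union bound already displayed in \cref{sec:outline},
\[
 \EE R_r(S)\le\sum_\sigma\sum_{\tau\simr\sigma}\EE\bigl[\ind\{\sigma,\tau\in S\}/Z\bigr],
\]
with the convention that the summand vanishes when $Z=0$. Since $\sigma\in S$ forces $Z\ge1$, each summand is well defined and bounded.

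By Poisson colouring, the counts $N_T=|\{j\le N(\lambda):C_j=T\}|$ for $T\subseteq\Sigma_n$ are independent $\Pois(\lambda p_T)$ variables, where $p_T=\PP(C_1=T)$. There are finitely many $T$, so this is a finite product. Moreover, $S=\bigcap_{T:N_T\ge1}T$ (with the empty intersection equal to $\Sigma_n$) depends only on the indicators $\xi_T=\ind\{N_T\ge1\}$, which are independent Bernoulli variables with $\PP(\xi_T=1)=1-e^{-\lambda p_T}$.

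Fix $\sigma$. The event $\{\sigma\in S\}$ is exactly $\{\xi_T=0\ \forall T\not\ni\sigma\}$. On this event, $S=\bigcap_{T\ni\sigma,\,\xi_T=1}T$ is a function of $\xi'=(\xi_T)_{T\ni\sigma}$, which is independent of the event. Set
\[
 f(\xi')=1/Z,\qquad h(\xi')=\ind\{\tau\in S\}.
\]
Turning on a $\xi_T$ can only shrink $S$, so $f$ is nondecreasing and $h$ is nonincreasing. Both are bounded, since $Z\ge1$ because $\sigma\in S$. \Cref{lem:harrisineq} applied conditionally gives
\[
 \EE\bigl[\ind\{\sigma,\tau\in S\}/Z\bigr]\le\PP\bigl(\tau\in S\mid\sigma\in S\bigr)\,\EE\bigl[\ind\{\sigma\in S\}/Z\bigr].
\]
The conditional probability is the probability that $\xi_T=0$ for all $T\ni\sigma$ with $T\not\ni\tau$, namely
\[
 \prod_{T\ni\sigma,\,T\not\ni\tau}e^{-\lambda p_T}=e^{-\lambda\PP(\sigma\in C_1,\,\tau\notin C_1)}.
\]

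Set $w_\sigma=\EE[\ind\{\sigma\in S\}/Z]$. Note that $\sum_\sigma w_\sigma=\PP(Z>0)\le1$. Swapping the roles of $\sigma$ and $\tau$ gives the analogous bound with $w_\tau$ and $\PP(\tau\in C_1,\sigma\notin C_1)$ in the exponent. Using $\min(a,b)\le\sqrt{ab}$ and then AM--GM on the weights,
\[
 \EE\bigl[\ind\{\sigma,\tau\in S\}/Z\bigr]\le e^{-\lambda\Delta(\sigma,\tau)/2}\sqrt{w_\sigma w_\tau}\le\tfrac12 e^{-\lambda\Delta(\sigma,\tau)/2}(w_\sigma+w_\tau).
\]
Now sum over ordered pairs with $\sigma\simr\tau$. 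The relation $\simr$ and $\Delta$ are symmetric, so the two halves contribute equally, giving
\[
 \EE R_r(S)\le\sum_\sigma w_\sigma\sum_{\tau\simr\sigma}e^{-\lambda\Delta(\sigma,\tau)/2}\le\max_\sigma\sum_{\tau\simr\sigma}e^{-\lambda\Delta(\sigma,\tau)/2}.
\]
This proves \eqref{eq:each-lambda}.

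For \eqref{eq:poimeanm}, take $\lambda=m$. Then $\PP(\Pois(m)=m)=e^{-m}m^m/m!\ge1/(3\sqrt m)$, by Robbins' form of Stirling's formula $m!\le\sqrt{2\pi m}(m/e)^me^{1/(12m)}$, since $\sqrt{2\pi}e^{1/12}<3$.

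The main point requiring care is the FKG step. One must condition on $\{\sigma\in S\}$ so that $1/Z$ becomes bounded and depends only on the independent coordinates $\xi'$, and then verify the monotonicity of $f$ and $h$ carefully, including the case where some $T=\varnothing$ kills everything. The remaining steps are bookkeeping.
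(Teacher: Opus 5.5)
Your proposal is correct and follows essentially the same route as the paper: Poissonize, use Poisson colouring to make the indicators $\ind\{N_T\ge1\}$ independent, apply the Harris--FKG inequality conditionally on $\{\sigma\in S\}$, symmetrize via the geometric and then the arithmetic mean, sum using $\sum_\sigma w_\sigma\le1$, and de-Poissonize with $\lambda=m$. The differences are cosmetic: you de-Poissonize first rather than last, and you make the Stirling constant explicit through Robbins' bound $\sqrt{2\pi}\,e^{1/12}<3$.
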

\begin{proof}
    For $N(\lambda)\sim \Pois(\lambda)$, we first show 
    \begin{equation}\label{eq:goal-1-FKG}
        \mathbb E R_r(S_{N(\lambda)}) \leq \max_{\sigma\in \Sigma_n} \sum_{\tau:\sigma\simr\tau} e^{-\lambda\Delta(\sigma,\tau)/2}\,,
    \end{equation}
    where the expectation is taken over both randomness of $C_1,C_2,\dots$ and $N(\lambda)$. To this end, we first explain the sources of independence introduced by Poissonization. For a subset $T\subseteq \Sigma_n$, we define 
    \begin{equation}\label{eq:count-T}
        \mathsf{Count}(T)=|\{1\leq a\leq N(\lambda):\,C_a=T\}|\,.
    \end{equation}
    Note that $C_a$'s are i.i.d random subsets, by Poisson colouring theorem, 
    we get 
    \begin{equation}
        \left\{\mathsf{Count}(T):\,T\subseteq \Sigma_n\right\}
    \end{equation}
    is a collection of independent Poisson variables with mean $\mathbb E\mathsf{Count}(T)=\lambda \mathbb P(C_1=T)$, where a Poisson variable with mean zero vanishes. Since $S_{N(\lambda)}$ is the intersection of sets $T$ with $\mathsf{Count}(T)\geq 1$, we have that for every $\sigma\in \Sigma_n$,
    \begin{equation}\label{eq:count-dependence}
        \{\sigma\in S_{N(\lambda)}\} = \{\mathsf{Count}(T)=0\text{ for any }T\text{ with }\sigma\notin T\},
    \end{equation}
    and thus $$\mathbb P\big(\sigma\in S_{N(\lambda)}\big)=\prod_{T:\sigma\notin T}\exp(-\lambda \mathbb P(C_1=T))=\exp\big(-\lambda \mathbb P(\sigma\notin C_1) \big).$$
    For $\sigma,\tau\in\Sigma_n$, let 
    \begin{equation}\label{eq:def-w-q}
        w_\sigma = \mathbb E\left[ \frac{\mathbf{1}\{\sigma\in S_{N(\lambda)}\}}{Z(S_{N_{\lambda}})} \right]\,,\quad q_{\sigma,\tau} = \mathbb E\left[ \frac{\mathbf{1}\{\sigma\in S_{N(\lambda)},\tau\in S_{N(\lambda)}\}}{Z(S_{N_{\lambda}})} \right]\,,
    \end{equation}
    where the ratio is $0$ when $Z(S_{N(\lambda)})=0$. For every $\sigma\in\Sigma_n$, the indicator that some $\tau\in S_{N(\lambda)}$ satisfies $\tau\simr\sigma$ is at most the number of such $\tau$. Therefore,
    \begin{equation}\label{eq:bound-R}
    \mathbb E R(S_{N(\lambda)})\le \mathbb E\Bigl[\frac1{Z_N}\sum_{\sigma\in S_{N(\lambda)}}\sum_{\tau\simr\sigma} \ind\{\tau\in S_{N(\lambda)}\}\Bigr]=\sum_{\sigma\in \Sigma_n}\sum_{\tau\simr\sigma}q_{\sigma,\tau}.
    \end{equation}
    Next, using FKG inequality, we will show
    \begin{equation}
        \label{eq:twosol-onesol}
        q_{\sigma,\tau}\leq \exp\big(-\lambda \mathbb P(\sigma\in C_1,\tau\notin C_1)\big)w_\sigma \text{ for all }\sigma,\tau\in \Sigma_n.
    \end{equation}
    Recall \eqref{eq:count-dependence}. The event $\sigma\in S_{N(\lambda)}$ depends only on the variables $\mathsf{Count}(T)$ with $\sigma\notin T$. Therefore, conditioning on $\{\sigma\in S_{N(\lambda)}\}$ leaves the variables $\{\mathsf{Count}(T):T\ni \sigma\}$ unchanged. On $\{\sigma\in S_{N(\lambda)}\}$, $S_{N(\lambda)}$ is exactly the intersection of sets $\{T\ni \sigma:\,\mathsf{Count}(T)\geq 1\}$. Thus, under this conditioning, we have that $\mathbf{1}\{\tau\in S_{N(\lambda)}\}$ and $1/Z(S_{N}(\lambda))$ are functions of the independent indicators $\big\{\mathbf{1}\{\mathsf{Count}(T)\geq 1\}:\,T\ni \sigma\big\}$. Noting that changing one of these indicators from $0$ to $1$ only removes elements from $S_{N(\lambda)}$, thus $\mathbf{1}\{\tau\in S_{N(\lambda)}\}$ is nonincreasing in each indicator and $1/Z(S_{N}(\lambda))$ nondecreasing in each indicator. By FKG inequality \Cref{lem:harrisineq}, we get that under this conditional law on $\{\sigma\in S_{N(\lambda)}\}$,
    \begin{equation}\label{eq:q-to-w}
        \mathbb E\left[ \frac{\mathbf{1}\{\tau\in S_{N(\lambda)}\}}{Z(S_{N(\lambda)})}\,\big|\,\sigma\in S_{N(\lambda)} \right]\leq \mathbb P(\tau\in S_{N(\lambda)}\,|\,\sigma\in S_{N(\lambda)})\cdot\mathbb E\left[\frac{1}{Z(S_{N(\lambda)})}\,\big|\,\sigma\in S_{N(\lambda)}\right]\,.
    \end{equation}
    Note that 
    \begin{equation}
    \begin{split}\label{eq:conditional-sol}
        &\mathbb P(\tau\in S_{N(\lambda)}\,|\,\sigma\in S_{N(\lambda)}) = \prod_{T:\tau\notin T,\sigma\in T} \mathbb P(\mathsf{Count}(T)=0)\\
        =&\exp\big(-\lambda \sum_{T:\tau\notin T,\sigma\in T}\mathbb P(C_1=T)\big)=\exp(-\lambda\mathbb P\big(\tau\notin C_1,\sigma\in C_1)\big)\,.
    \end{split}
    \end{equation}
    Plugging \eqref{eq:conditional-sol} into \eqref{eq:q-to-w} and then multiplying both sides of \eqref{eq:q-to-w} by $\mathbb P(\sigma\in S_{N(\lambda)})$ yields \eqref{eq:twosol-onesol}. 
    Recall the definition of $\Delta(\sigma,\tau)$ in \Cref{prop:upper-bound-FKG}. Note that $q_{\sigma,\tau}=q_{\tau,\sigma}$, by symmetrization we have
    \begin{equation}\label{eq:twosol-onesol-symmetrized}
        q_{\sigma,\tau} \leq e^{-\lambda \Delta(\sigma,\tau)/2}\sqrt{w_\sigma w_\tau}\leq \frac{1}{2} e^{-\lambda \Delta(\sigma,\tau)/2} (w_\sigma+w_\tau)\text{ for all }\sigma,\tau\in \Sigma_n\,.
    \end{equation}
    Since $\sum_{\sigma}w_\sigma = \mathbb P(Z(S_{N(\lambda)})>0)\leq 1$. Plugging \eqref{eq:twosol-onesol-symmetrized} into \eqref{eq:bound-R} yields
    \begin{equation}
    \begin{split}
        &\mathbb ER_r(S_{N(\lambda)})\leq \sum_{\sigma\in \Sigma_n}\sum_{\tau:\tau\simr \sigma}\frac{1}{2} e^{-\lambda \Delta(\sigma,\tau)/2} (w_\sigma+w_\tau) \\
        =& \sum_{\sigma\in \Sigma_n}\sum_{\tau:\tau\simr \sigma} e^{-\lambda \Delta(\sigma,\tau)/2}w_\sigma \leq \max_{\sigma\in\Sigma_n} \sum_{\tau:\tau\simr\sigma}e^{-\lambda \Delta(\sigma,\tau)/2}\,,
    \end{split}
    \end{equation}
    where the equality follows from the fact that relation $\simr$ and function $\Delta(\cdot,\cdot)$ are symmetric. This shows \eqref{eq:goal-1-FKG}. Conditional on $\{N(\lambda)=m\}$, we have
    $$\mathbb ER_r(G) = \mathbb E[R_r(S_{N(\lambda)})\,|\,N(\lambda)=m]\,.$$ Thus, we have 
    \begin{equation}
        \mathbb ER_r(G) \leq \frac{\mathbb ER_r(S_{N(\lambda)})}{\mathbb P(N(\lambda)=m)}=\frac{1}{\mathbb P\big(N(\lambda)=m\big)} \max_{\sigma\in \Sigma_n} \sum_{\tau:\,\sigma\simr \tau} e^{-\lambda\Delta(\sigma,\tau)/2}\,,
    \end{equation}
   thereby confirming \eqref{eq:each-lambda}. For the second claim, we take $\lambda=m$,
   \begin{equation}
       \mathbb P(N(m)=m) = \frac{m^m}{m!}e^{-m}\geq \frac{1}{3\sqrt{m}}.\notag
   \end{equation}
   The last inequality follows from Stirling's formula. The proof is complete.
\end{proof}

\section{Proof of \texorpdfstring{\cref{thm:main}}{Theorem 1.1}}
\label{sec:proof}
\Cref{prop:upper-bound-FKG} reduces \cref{thm:main} to a lower bound on $\Delta(\sigma,\tau)$. In this section, we therefore focus on one random subset. Throughout this section, we let $g=(X_1,\dots,X_n)$ have i.i.d coordinates with distribution $\nu$. For a Borel set $F\subseteq \mathbb R$, and $\sigma,\tau\in\Sigma_n$, the probability that $F$ contains exactly one of the two normalized inner product of $g$ with $\tau$ and $\sigma$ is
\begin{equation}\label{eq:DeltaF}
 \Delta_F(\sigma,\tau)=\PP\Bigl(
 \ind_F\bigl(\ip g\sigma/\sqrt n\bigr)
 \ne\ind_F\bigl(\ip g\tau/\sqrt n\bigr)\Bigr).
\end{equation}
With $F=K$, \eqref{eq:DeltaF} is the probability $\Delta(\sigma,\tau)$ of
Proposition 2.2, so $\Delta(\sigma,\tau)=\Delta_K(\sigma,\tau)$.
For $\sigma,\tau\in \Sigma_n$, we define $\mathsf D(\sigma,\tau)=\{i\in[n]:\,\sigma_i\neq\tau_i\}$. And let
\begin{equation}
    U=\sum_{i\notin \mathsf D(\sigma,\tau)}\sigma_iX_i\,,\quad V=\sum_{i\in \mathsf D(\sigma,\tau)}\sigma_iX_i\,.
\end{equation}
Therefore we have $\langle g,\sigma\rangle =U+V$, $\langle g,\tau\rangle = U-V$ and $U,V$ are independent. We first establish lower bounds for $F=[\kappa,\infty)$ in \Cref{sec:halfspace} and then transfer the bound to general $K$ in \Cref{sec:general}.

\subsection{Lower bound for half-lines}
\label{sec:halfspace}
For $F=[\kappa,\infty)$, we have
\begin{equation}\label{eq:deltahalfspace}
    \Delta_{[\kappa,\infty)}(\sigma,\tau) = \mathbb P\big(\kappa\sqrt{n}-|V| \leq U< \kappa\sqrt{n}+|V|\big)\,.
\end{equation}
The following lemma shows that $\Delta_{[\kappa,\infty)}(\sigma,\tau)$ is at least constant multiple of $\sqrt{d_H(\sigma,\tau)/n}$, the order of the probability that a Gaussian variable with variance $n$ falls into a fixed interval of length $\sqrt{d_H(\sigma,\tau)}$ near $\kappa\sqrt{n}$. 

\begin{lemma}\label{lem:halfline}
Assume \eqref{eq:moments} and let $\kappa\in\mathbb R$. There are
$\beta_0>0$ and $n_1\in\mathbb N$, depending only on $\kappa$ and the law of $X$, such that every $n\ge n_1$ and all $\sigma,\tau\in\{-1,+1\}^n$ with $1\le d_H(\sigma,\tau)\le n/4$ satisfy
\[
 \Delta_{[\kappa,\infty)}(\sigma,\tau)\ge\beta_0\sqrt{d_H(\sigma,\tau)/n}.
\]
\end{lemma}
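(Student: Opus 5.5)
Let $k=d_H(\sigma,\tau)$, so $V$ is a sum of $k$ i.i.d.\ terms $\sigma_iX_i$ with mean zero and variance one, and $U$ is an independent sum of $n-k\ge 3n/4$ such terms. By \eqref{eq:deltahalfspace} and independence,
\[
\Delta_{[\kappa,\infty)}(\sigma,\tau)=\EE\bigl[\phi(|V|)\bigr],\qquad \phi(v)=\PP\bigl(\kappa\sqrt n-v\le U<\kappa\sqrt n+v\bigr).
\]
The plan has two steps. First, show that $|V|$ is at least of order $\sqrt k$ with probability bounded below. Second, show that $U$ lands in a window of width of order $\sqrt k$ around $\kappa\sqrt n$ with probability of order $\sqrt{k/n}$.

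For $V$, a Paley--Zygmund argument needs fourth moments, which we lack. Instead I will use a uniform anticoncentration bound for normalized sums with only second moments. By the central limit theorem for fixed $k$ large, and by a direct argument for small $k$, there exist $c_1,p_1>0$ such that $\PP(|V|\ge c_1\sqrt k)\ge p_1$ for all $k\ge1$. For $k\ge k_0$ this follows from the CLT, since $V/\sqrt k\to N(0,1)$ and $\PP(|N|\ge 1/2)>0$. For $k<k_0$, it suffices that $V\neq0$ with positive probability; this is uniform in the finitely many $k$ because $V$ is nondegenerate (variance $k$). So $\PP(|V|\ge c_1)\ge p_1'$, and $c_1\ge c_1'\sqrt k$ for $k<k_0$. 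Hence
\[
\Delta\ge p_1\,\phi(c_1\sqrt k).
\]

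The main obstacle is a lower bound on $\phi(c_1\sqrt k)$, since $U$ may be a lattice variable (e.g.\ Rademacher) and a local limit theorem is not freely available. I will avoid local limit theorems through a covering argument. Put $w=c_1\sqrt k$ and $s=\sqrt{n-k}$. By the CLT applied to $U/s$, uniformly over $n-k\ge 3n/4$, we have $\PP\bigl(U\in[\kappa\sqrt n-Ls,\kappa\sqrt n+Ls]\bigr)\ge p_2>0$ for a fixed large $L$ and $n\ge n_1$. Here $\kappa\sqrt n/s$ stays bounded, since $s\ge\sqrt{3n}/2$.

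Two regimes arise. If $w\ge\varepsilon s$, meaning $k$ is comparable to $n$, the CLT directly gives $\phi(w)\ge\PP(|U-\kappa\sqrt n|<\varepsilon s)\ge p_3$, which is at least $p_3\sqrt{k/n}$. If $w<\varepsilon s$, I will compare windows. Write $U=U'+U''$, where $U''$ is the sum of a block of $\ell\asymp k$ of the coordinates, chosen so that $U''$ has spread of order $w$. Then the shifted windows $\kappa\sqrt n+jw$, for $|j|\le Ls/w$, should all have probabilities within constant factors of each other, because $U''$ smooths $U'$ at scale $w$. I expect this smoothing-comparison step to be the most delicate.

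A cleaner alternative avoids the comparison. Use symmetrization and an inverse-Littlewood--Offord-free bound, namely the Kolmogorov--Rogozin concentration inequality in reverse. Apply the CLT to $U''/\sqrt{\ell}$, and condition on $U'$: the CLT approximation for $U''$ then places the value of $U$ in a prescribed window of width $w$ with probability $\gtrsim w/s$, provided $U'$ lies within $O(s)$ of $\kappa\sqrt n$. To arrange this, choose $\ell\asymp n$ rather than $\ell\asymp k$. Then $U''$ has standard deviation of order $s$, and a Berry--Esseen-type bound is unavailable without third moments. So I would fall back on the first route: partition $[\kappa\sqrt n-Ls,\kappa\sqrt n+Ls]$ into $O(s/w)$ windows of width $2w$ and show that the central one is not much smaller than the average. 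Averaging then gives $\phi(w)\gtrsim p_2\,w/(Ls)\gtrsim\sqrt{k/n}$. Combining the two steps yields $\beta_0$.
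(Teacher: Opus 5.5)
There is a genuine gap. Your reduction $\Delta_{[\kappa,\infty)}(\sigma,\tau)=\EE[\phi(|V|)]\ge p_1\,\phi(c_1\sqrt k)$ is fine, and so is the regime $w\ge\varepsilon s$. But the heart of the lemma is never proved: you need that the \emph{specific} window $[\kappa\sqrt n-w,\kappa\sqrt n+w)$ carries $U$-mass $\gtrsim w/\sqrt n$ when $w\ll\sqrt n$.

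Partitioning $[\kappa\sqrt n-Ls,\kappa\sqrt n+Ls]$ into $O(s/w)$ windows and averaging only shows that \emph{some} window has mass $\gtrsim w/s$. The claim that ``the central one is not much smaller than the average'' is exactly what must be shown, and the smoothing heuristic does not supply it. An independent block $U''$ of spread $\asymp w$ does not make windows up to distance $Ls$ apart comparable. That would require the law of $U'$ to be flat at scale $w$ across the whole range $s$, which is itself a local limit statement. For bounded $k$, $U''$ is a sum of $O(1)$ variables and smooths nothing; for lattice $\nu$ it is itself lattice. The Kolmogorov--Rogozin inequality only gives upper bounds on concentration, so it cannot help here.

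There is a related slip for lattice laws of span $h$. Knowing $V\neq0$ only gives $\PP(|V|\ge c_1)\ge p_1'$ for some small $c_1$. If $2c_1<h$, the window can miss every atom of $U$, making the bound vacuous. For example, with Rademacher disorder, $\kappa=0$, $k=1$, $n$ even and $c_1=1/2$, $U$ is odd and $\phi(1/2)=0$. You need $|V|\ge h/2$ with positive probability. This does hold: $(-h/2,h/2)$ contains at most one point of the lattice of $V$, and $V$ is nonconstant.

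The missing idea is that the local limit theorem \emph{is} available with only a second moment, for lattice laws in Gnedenko's form and for nonlattice laws in Stone's form. So latticeness is not the obstruction. The real obstruction to applying it to $U$ is that the summands $\sigma_iX_i$ are not identically distributed unless $\nu$ is symmetric.

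The paper removes this obstruction as follows.
\begin{itemize}
\item Let $J$ be the larger of the two classes $\{i\notin\mathsf D(\sigma,\tau):\sigma_i=\pm1\}$. Then $U_J=\sum_{i\in J}\sigma_iX_i$ is a sum of $L\ge3n/8$ i.i.d.\ copies of $X$ or of $-X$.
\item Write $U=U_J+W$. By Chebyshev, $|W|\le\sqrt n$ with probability at least $1/2$.
\item The LLT gives $\PP(U_J\in[x,x+\iota))\ge\iota\varphi(B)/(2\sqrt L)$ uniformly over such intervals inside $[-B\sqrt L,B\sqrt L]$, where $\iota=h$ or $1$.
\item Condition on $(V,W)$ with $a\sqrt k\le|V|\le b\sqrt k$, $|V|\ge\iota/2$ and $|W|\le\sqrt n$. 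The window of length $2|V|$ then contains at least $|V|/\iota$ disjoint intervals of length $\iota$, each with that mass. This gives $\Delta_{[\kappa,\infty)}(\sigma,\tau)\gtrsim\sqrt{k/n}$.
\end{itemize}
Without this step, or an equivalent characteristic-function argument, your proposal does not establish the lemma.
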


\begin{proof}
    The proof treats lattice and nonlattice laws of $X$ separately, since the local limit theorems available in the two cases take different forms \cite[Section~3.5]{Dur19}. The law of a real random variable $X$ is \emph{lattice} if there exist $\gamma\in\mathbb R$ and $h>0$ such that $\PP(X\in \gamma+h\mathbb Z)=1$. The largest such $h$ is the \emph{span} of the law. Otherwise the law is \emph{nonlattice}. Now we fix $\sigma,\tau$ and write $k=d_H(\sigma,\tau)\in[1,n/4]$. We first show that $|V|$ is of order $\sqrt{k}$ with probability bounded away from zero. In the following, we let $\iota = h$ if $X$ is lattice with maximal span $h$ and $\iota=1$ if $X$ is nonlattice. 
    
    Firstly, we will show there are numbers $a\in (0,1],b\in[2,\infty)$ and $\rho>0$, depending only on the law $\nu$, such that for every $k\geq 1$ and every $\varepsilon\in \{+1,-1\}^k$ satisfy
    \begin{equation}\label{eq:uniform-prob-frombelow}
        \mathbb P\big( a\sqrt{k} \leq |\sum_{i=1}^k\varepsilon_i X_i|\leq b\sqrt{k}\text{ and }|\sum_{i=1}^k\varepsilon_i X_i|\geq \frac{\iota}{2} \big)\geq \rho.
    \end{equation}
    For large $k$, \eqref{eq:uniform-prob-frombelow} follows from the central limit theorem. Consider integers $k\to\infty$ and sign vectors $\varepsilon\in\{-1,+1\}^{k}$. The variables $\varepsilon_iX_i/\sqrt{k}$, $i\le k$, are independent and
centered, their variances add up to $1$, and every $\xi>0$ satisfies
\[
 \sum_{i=1}^{k}\EE\Bigl[\frac{X_i^2}{k}\,
 \ind\{|X_i|\ge\xi\sqrt{k}\}\Bigr]
 =\EE\bigl[X^2\,\ind\{|X|\ge\xi\sqrt{k}\}\bigr]\to 0\,,
\]
where the last convergence follows by dominated convergence theorem. Lindeberg's theorem shows that $k^{-1/2}\sum_i\varepsilon_iX_i$ converges in distribution to $\mathcal{N}(0,1)$. Therefore the
probability that $|k^{-1/2}\sum_i\varepsilon_iX_i|$ lies in $[1,2]$ converges to $\PP(1\le|\mathcal{N}(0,1)|\le2)>0$. Consequently, there is $k_0\ge\iota^2/4$ such
that
\begin{equation}\label{eq:largek}
 \PP\Bigl(\sqrt k\le\Bigl|\sum_{i=1}^k\varepsilon_iX_i\Bigr|\le2\sqrt
 k\Bigr)\ge\frac12\,\PP(1\le|\mathcal N(0,1)|\le2)
 \quad\text{for all }k\ge k_0\text{ and }\varepsilon\in\{-1,+1\}^k.
\end{equation}
\eqref{eq:largek} is also uniform in $\varepsilon\in\{+1,-1\}^k$
since otherwise there would be $k\to\infty$ and $\varepsilon\in\{-1,+1\}^{k}$ for which the probability in
\eqref{eq:largek} stays below $\PP(1\le|\mathcal{N}(0,1)|\le2)/2$.

It remains to treat $k<k_0$, where only finitely many laws of
$\sum_{i=1}^k\varepsilon_iX_i$ with $\varepsilon\in\{-1,+1\}^k$ occur. Each of
them assigns positive probability to $\{|\sum_i\varepsilon_iX_i|\geq\iota/2\}$.
If $\nu$ is nonlattice, then $\iota/2=1/2$, and $|\sum_i\varepsilon_iX_i|<1/2$
almost surely would give $\operatorname{Var}(\sum_i\varepsilon_iX_i)\le1/4<k$.
If $\nu$ is lattice with span $h$, then $\iota/2=h/2$, and
$\sum_i\varepsilon_iX_i$ takes values in an affine lattice of span $h$. The
interval $(-h/2,h/2)$ contains at most one point of this lattice, so
$|\sum_i\varepsilon_iX_i|<h/2$ almost surely would make
$\sum_i\varepsilon_iX_i$ constant, which contradicts
$\operatorname{Var}(\sum_i\varepsilon_iX_i)=k$. Hence there exists $b\ge2$,
depending only on $\nu$, such that each of these laws assigns positive
probability to $\{\iota/2\le|\sum_i\varepsilon_iX_i|\le b\}$. Therefore, we define $\rho$ to be the minimum of $\PP(1\le|\mathcal{N}(0,1)|\le2)/2$ and these positive probabilities. Let $a=\min\{1,\iota/(2\sqrt{k_0})\}$. For $k<k_0$, we have $a\sqrt{k}\leq \iota/2$ and $b\leq b\sqrt{k}$. Combined this choice of $a,b$ and \eqref{eq:largek}, \eqref{eq:uniform-prob-frombelow} follows.

Next we split $U$. The coordinates outside $\mathsf D(\sigma,\tau)$ form the two
sets $\{i\notin\mathsf D(\sigma,\tau):\sigma_i=1\}$ and
$\{i\notin\mathsf D(\sigma,\tau):\sigma_i=-1\}$. Let $J$ be the larger of them,
or the first one if they have the same size, and let $L=|J|$, therefore
$L\ge(n-k)/2\ge3n/8$. Then $U=U_J+W$ with
\[
 U_J=\sum_{i\in J}\sigma_iX_i,\qquad
 W=\sum_{i\notin\mathsf D(\sigma,\tau)\cup J}\sigma_iX_i,
\]
and $U_J$, $W$, $V$ are independent. The variable $U_J$ is a sum of $L$
independent copies of $X$ or of $-X$. Since $\EE W=0$ and
$\operatorname{Var}W=n-k-L\le n/2$, Chebyshev's inequality gives
\begin{equation}\label{eq:W-small}
 \PP\bigl(|W|\le\sqrt n\bigr)\ge\tfrac12 .
\end{equation}

Let $\varphi$ be the density of $\mathcal N(0,1)$ and
$B=\sqrt{8/3}\,(|\kappa|+b/2+1)$. We claim that there is $L_0$, depending only
on $\kappa$ and $\nu$, such that for $L\ge L_0$ every interval
$[x,x+\iota)\subseteq[-B\sqrt L,B\sqrt L]$ satisfies
\begin{equation}\label{eq:lclt}
 \PP\bigl(U_J\in[x,x+\iota)\bigr)\ge\frac{\iota\,\varphi(B)}{2\sqrt L}.
\end{equation}
If $\nu$ is nonlattice, the local limit
theorem applied to $X$ or to $-X$ gives
\[
 \sup_{x\in\mathbb R}\Bigl|\sqrt L\,\PP\bigl(U_J\in(x,x+1)\bigr)-\varphi(x/\sqrt L)\Bigr|
 \longrightarrow0\qquad(L\to\infty).
\]
If $\nu$ is lattice with span $h$, then $U_J$ takes values in an affine lattice
$\mathcal L_L$ of span $h$, and the local limit theorem gives
\[
 \sup_{z\in\mathcal L_L}\Bigl|\frac{\sqrt L}{h}\,\PP(U_J=z)-\varphi(z/\sqrt L)\Bigr|
 \longrightarrow0\qquad(L\to\infty).
\]
Each interval $[x,x+h)$ contains exactly one point of $\mathcal L_L$. All
points involved lie in $[-B\sqrt L,B\sqrt L]$, where
$\varphi(\cdot/\sqrt L)\ge\varphi(B)$. Choosing $L_0$ so that these suprema are
at most $\varphi(B)/2$ for $L\ge L_0$, for both $X$ and $-X$, proves
\eqref{eq:lclt}.

It remains to combine these estimates. Let $n_1=\lceil8L_0/3\rceil$ and
$n\ge n_1$, so that $L\ge3n/8\ge L_0$. By \eqref{eq:deltahalfspace} and
$U=U_J+W$,
\[
 \Delta_{[\kappa,\infty)}(\sigma,\tau)
 =\PP\bigl(U_J\in[\kappa\sqrt n-W-|V|,\ \kappa\sqrt n-W+|V|)\bigr).
\]
Let $\mathcal E$ be the event that $a\sqrt k\le|V|\le b\sqrt k$,
$|V|\ge\iota/2$ and $|W|\le\sqrt n$. By \eqref{eq:uniform-prob-frombelow},
\eqref{eq:W-small} and the independence of $V$ and $W$,
$\PP(\mathcal E)\ge\rho/2$. On $\mathcal E$, the interval in the last display
has length $2|V|\ge\iota$, so it contains $\lfloor2|V|/\iota\rfloor\ge|V|/\iota$
disjoint intervals of the form $[x,x+\iota)$. Every point of the interval has
absolute value at most
\[
 |\kappa|\sqrt n+\sqrt n+b\sqrt k\le\Bigl(|\kappa|+1+\frac b2\Bigr)\sqrt n
 \le B\sqrt L,
\]
because $k\le n/4$ and $n\le8L/3$. Since $U_J$ is independent of $(V,W)$,
\eqref{eq:lclt} shows that, conditionally on $(V,W)$ and on $\mathcal E$, the
probability that $U_J$ lies in the interval is at least
$|V|\varphi(B)/(2\sqrt L)\ge a\sqrt k\,\varphi(B)/(2\sqrt L)$. Therefore
\[
 \Delta_{[\kappa,\infty)}(\sigma,\tau)
 \ge\frac{\rho}{2}\cdot\frac{a\sqrt k\,\varphi(B)}{2\sqrt L}
 \ge\frac{\rho a\varphi(B)}{4}\sqrt{\frac kn},
\]
where the last step uses $L\le n$. This proves the lemma with
$\beta_0=\rho a\varphi(B)/4$.
\end{proof}

\subsection{Transferring the lower bound to \texorpdfstring{$K$}{K}}
\label{sec:general}

In this section, we transfer \cref{lem:halfline} to the set $K$. 

\begin{lemma}\label[lemma]{lem:comparison}
Assume \eqref{eq:moments} and \eqref{eq:boundary}. There exist
$\delta_0\in(0,1/4]$, $\beta>0$ and $n_2\in\mathbb N$, depending only on $K$
and $\nu$, such that every $n\ge n_2$ and all $\sigma,\tau\in\Sigma_n$ with
$1\le d_H(\sigma,\tau)\le\delta_0n$ satisfy
\[
 \Delta_K(\sigma,\tau)\ge\beta\sqrt{d_H(\sigma,\tau)/n}.
\]
\end{lemma}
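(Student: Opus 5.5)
We reduce to \cref{lem:halfline} by localizing near the jump point $\kappa$ of \eqref{eq:boundary}. Assume the first alternative, $(\kappa-\eta,\kappa)\subseteq K$ and $(\kappa,\kappa+\eta)\cap K=\varnothing$; the second is identical with the roles of the two sides exchanged. Write $s=\ip g\sigma/\sqrt n$ and $t=\ip g\tau/\sqrt n$. Whenever one of $s,t$ lies in $(\kappa-\eta,\kappa)$ and the other in $(\kappa,\kappa+\eta)$, exactly one lies in $K$. Hence
\[
 \Delta_K(\sigma,\tau)\ge \PP\bigl(\min(s,t)\in(\kappa-\eta,\kappa),\ \max(s,t)\in(\kappa,\kappa+\eta)\bigr).
\]
The event on the right equals $\{\min(s,t)<\kappa<\max(s,t)\}$ minus the event that this holds but $|s-t|\ge\eta$ or one of $s,t$ is too far from $\kappa$. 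Since $\min(s,t)<\kappa<\max(s,t)$ together with $|s-t|<\eta$ forces both points into the required windows, the only bad case is $|s-t|=2|V|/\sqrt n\ge\eta$. Therefore
\[
 \Delta_K(\sigma,\tau)\ge \PP\bigl(\min(s,t)<\kappa<\max(s,t)\bigr)-\PP\bigl(\min(s,t)<\kappa\le\max(s,t),\ 2|V|\ge\eta\sqrt n\bigr),
\]
where the boundary events $s=\kappa$ or $t=\kappa$ need a small adjustment: replace the half-open conventions by $[\kappa,\infty)$, and note the strict versus nonstrict inequality differs only by atoms at $\kappa\sqrt n$.

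The first term is $\Delta_{[\kappa,\infty)}(\sigma,\tau)$ up to this atom issue. Rather than fuss over it, we will rerun the end of the proof of \cref{lem:halfline} with the open interval $(\kappa\sqrt n-|V|,\kappa\sqrt n+|V|)$ on the event $\mathcal E$. The local bound \eqref{eq:lclt} is applied to $\lfloor 2|V|/\iota\rfloor-1$ translates, which still gives order $|V|/\iota$ when $|V|$ is large. The small-$k$ case may need $|V|\ge\iota$ instead of $\iota/2$, which only changes constants in \eqref{eq:uniform-prob-frombelow}. This yields a lower bound $\beta_0\sqrt{k/n}$ for the open-window probability.

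It remains to bound the error term by $\tfrac{\beta_0}{2}\sqrt{k/n}$ when $k\le\delta_0 n$. Condition on $V$. The event requires $U$ to fall in a window of length $2|V|$ around $\kappa\sqrt n$, and also $|V|\ge\eta\sqrt n/2$. We will use a uniform concentration bound on $U$, a sum of $n-k\ge 3n/4$ i.i.d.\ terms: $\PP(U\in[x,x+\ell])\le C(\ell+1)/\sqrt n$ for all $x$ and $\ell\ge0$. This follows from the local limit theorem, or from a Kolmogorov--Rogozin type inequality, uniformly over the splitting into $X$ and $-X$ as in the proof of \cref{lem:halfline}. The error term is then at most
\[
 \frac{C}{\sqrt n}\,\EE\bigl[(2|V|+1)\ind\{|V|\ge\eta\sqrt n/2\}\bigr].
\]
Using $\ind\{|V|\ge y\}\le |V|/y$ and $\EE V^2=k$ gives the bound $\frac{C'}{\sqrt n}\cdot\frac{k}{\eta\sqrt n}=C'\frac{k}{\eta n}$. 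This is at most $\sqrt{k/n}\cdot C'\sqrt{\delta_0}/\eta$, which is below $\tfrac{\beta_0}{2}\sqrt{k/n}$ once $\delta_0$ is small.

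The main obstacle is the uniform anticoncentration (upper) bound for $U$ over windows of arbitrary length, uniform in $n$ and in the sign pattern. We get it by reusing the decomposition $U=U_J+W$: condition on $W$ and apply the local limit theorem to $U_J$, which gives uniformity, plus a trivial bound for $n<n_2$. A second, minor point is the atom at $\kappa$ in the lattice case, which is handled by the open-window variant described above.
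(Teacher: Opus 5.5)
\textbf{Gap: the open-window lower bound is false for lattice disorder.} Restricting to $\min(s,t)\in(\kappa-\eta,\kappa)$ and $\max(s,t)\in(\kappa,\kappa+\eta)$ throws away the event that one inner product equals $\kappa\sqrt n$ exactly. That atom is not a negligible correction, and the claimed bound $\PP\bigl(U\in(\kappa\sqrt n-|V|,\kappa\sqrt n+|V|)\bigr)\ge\beta_0\sqrt{k/n}$ fails. Take Rademacher $\nu$, $\kappa=0$ (so $K=[0,\infty)$, the ABP at zero margin), $n$ even and $k=1$.
\begin{itemize}
\item Here $V=\pm1$, and $U$ is a signed sum of $n-1$ Rademachers, hence odd.
\item The open-window event requires $|U|<1$, which is impossible, so its probability is exactly $0$.
\item Meanwhile $\Delta_K=\PP(U\in[-1,1))=\PP(U=-1)\asymp n^{-1/2}$. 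All of this mass comes from configurations where one of $\ip g\sigma,\ip g\tau$ equals $0$.
\end{itemize}
Your remedy of requiring $|V|\ge\iota$ at small $k$ does not merely change constants. In this example $\iota=2$ and $|V|=1$ almost surely, so \eqref{eq:uniform-prob-frombelow} with $\iota$ in place of $\iota/2$ has probability zero. No estimate of the form ``strict-straddle probability minus error'' can recover the lemma when $\kappa\sqrt n$ lies on the lattice supporting $\ip g\sigma$.

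\textbf{Fix: keep the boundary point, as the paper does.} Let $F$ be the half-line with endpoint $\kappa$ that agrees with $K$ on $(\kappa-\eta,\kappa+\eta)$, including at $\kappa$ itself. Then $F$ is one of $(-\infty,\kappa]$, $(-\infty,\kappa)$, $[\kappa,\infty)$, $(\kappa,\infty)$.
\begin{itemize}
\item Using $\Delta_F=\Delta_{\mathbb R\setminus F}$ and the reflection $g\mapsto-g$ (the law $\check\nu$ of $-X$ also satisfies \eqref{eq:moments}), each case is exactly \cref{lem:halfline} for $(\nu,\kappa)$ or $(\check\nu,-\kappa)$. No open-window variant is needed.
\item Your localization step then works verbatim with $F$ in place of the strict windows. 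If exactly one of $s,t$ lies in $F$ and $2|V|<\eta\sqrt n$, then $\kappa$ lies between them (inclusively), both lie in $(\kappa-\eta,\kappa+\eta)$, and so exactly one lies in $K$.
\end{itemize}

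\textbf{The anticoncentration bound for $U$ is unnecessary.} You flagged it as the main obstacle, but the crude bound $\Delta_K\ge\Delta_F-\PP(2|V|\ge\eta\sqrt n)$ together with Chebyshev gives an error $4k/(\eta^2n)$. This is of the same order $k/n$ as your $C'k/(\eta n)$, and it is at most $(4\sqrt{\delta_0}/\eta^2)\sqrt{k/n}$, which suffices once $\delta_0$ is small.
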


\begin{proof}
Choose $\kappa$ and $\eta$ as in \eqref{eq:boundary}, and let $F$ be the
half-line with endpoint $\kappa$ that agrees with $K$ on
$(\kappa-\eta,\kappa+\eta)$, including at $\kappa$. Namely, under the first
alternative in \eqref{eq:boundary}, $F=(-\infty,\kappa]$ if $\kappa\in K$ and
$F=(-\infty,\kappa)$ otherwise. Under the second, $F=[\kappa,\infty)$ if
$\kappa\in K$ and $F=(\kappa,\infty)$ otherwise. We first note that there are $\beta_1>0$ and $n_2\in\mathbb N$, depending only
on $\kappa$ and $\nu$, such that every $n\ge n_2$ and all $\sigma,\tau$ with
$1\le k=d_H(\sigma,\tau)\le n/4$ satisfy
\begin{equation}\label{eq:halflineF}
 \Delta_F(\sigma,\tau)\ge\beta_1\sqrt{\frac kn}.
\end{equation}
Since $\Delta_F=\Delta_{\mathbb R\setminus F}$, the half-lines
$(-\infty,\kappa)$ and $(\kappa,\infty)$ reduce to $[\kappa,\infty)$ and
$(-\infty,\kappa]$. Replacing $g$ by $-g$ turns $\Delta_{(-\infty,\kappa]}$
for $\nu$ into $\Delta_{[-\kappa,\infty)}$ for the law $\check\nu$ of $-X$,
which also satisfies \eqref{eq:moments}. Hence \eqref{eq:halflineF} follows
from \cref{lem:halfline} applied to $(\nu,\kappa)$ and $(\check\nu,-\kappa)$.

Next we compare $\Delta_K$ with $\Delta_F$. Suppose that $F$ contains exactly one of the points $(U\pm V)/\sqrt n$ and that $2|V|<\eta\sqrt n$. Then $\kappa$ lies between the two points, which are at distance
$2|V|/\sqrt n<\eta$ from each other. Hence both points lie in
$(\kappa-\eta,\kappa+\eta)$, where $K$ agrees with $F$, so $K$ also contains exactly one of them. Since $V$ is centered with variance $k$, Chebyshev's
inequality and \eqref{eq:halflineF} give
\begin{equation}\label{eq:comparison}
 \Delta_K(\sigma,\tau)
 \ge\Delta_F(\sigma,\tau)-\PP\bigl(2|V|\ge\eta\sqrt n\bigr)
 \ge\beta_1\sqrt{\frac kn}-\frac{4k}{\eta^2n}.
\end{equation}
Finally, choose $\delta_0\in(0,1/4]$ with $4\sqrt{\delta_0}/\eta^2\le\beta_1/2$.
For $k\le\delta_0n$, we have
$$4k/(\eta^2n)\le(4\sqrt{\delta_0}/\eta^2)\sqrt{k/n}\le(\beta_1/2)\sqrt{k/n}.$$
Plugging this into \eqref{eq:comparison} proves the lemma with
$\beta=\beta_1/2$.
\end{proof}

\subsection{Completing the proof}
\begin{proof}[Proof of \cref{thm:main}]
Let $\delta_0$, $\beta$ and $n_2$ be given by \cref{lem:comparison}. Since
$\sqrt\delta\log(e/\delta)\to0$ as $\delta\downarrow0$, we can choose
$\delta\in(0,\delta_0]$ such that
\begin{equation}\label{eq:deltachoice}
 \sqrt\delta\,\log(e/\delta)\le\frac{\alpha\beta}8 .
\end{equation}
Let $n\ge\max\{n_2,2/\alpha,1/\delta\}$ and $r=\lfloor\delta n\rfloor$, so
that $r\ge1$ and $m\ge\alpha n-1\ge\alpha n/2$. Recall that
$\Delta(\sigma,\tau)=\Delta_K(\sigma,\tau)$. Thus \eqref{eq:poimeanm} gives
\begin{equation}\label{eq:sumk}
 \EE R_r(G)\le3\sqrt m\,\max_{\sigma\in\Sigma_n}\sum_{k=1}^r
 \sum_{\tau:\,d_H(\sigma,\tau)=k}e^{-m\Delta_K(\sigma,\tau)/2}.
\end{equation}

Fix $\sigma\in\Sigma_n$ and $1\le k\le r$. Since $k\le\delta n\le\delta_0n$
and $m\ge\alpha n/2$, \cref{lem:comparison} gives
$m\Delta_K(\sigma,\tau)/2\ge(\alpha\beta/4)\sqrt{nk}$ for every $\tau$ with
$d_H(\sigma,\tau)=k$. There are $\binom nk\le(en/k)^k$ such $\tau$. Note that
$t\mapsto\sqrt t\log(e/t)$ is nondecreasing on $(0,e^{-1}]$, which contains
$k/n$ and $\delta$. Together with \eqref{eq:deltachoice}, this gives
\[
 \log\binom nk\le\sqrt{nk}\cdot\sqrt{\frac kn}\,\log\frac{en}k
 \le\sqrt{nk}\cdot\sqrt\delta\,\log\frac e\delta
 \le\frac{\alpha\beta}8\sqrt{nk}.
\]
Therefore
\[
 \sum_{\tau:\,d_H(\sigma,\tau)=k}e^{-m\Delta_K(\sigma,\tau)/2}
 \le\binom nk e^{-\frac{\alpha\beta}4\sqrt{nk}}
 \le e^{-\frac{\alpha\beta}8\sqrt{nk}}
 \le e^{-\frac{\alpha\beta}8\sqrt n}.
\]
Plugging this into \eqref{eq:sumk} and using $r\le n$, we get
\[
 \EE R_r(G)\le3n\sqrt m\,e^{-\frac{\alpha\beta}8\sqrt n}.
\]

Let $c=\alpha\beta/32$. Since $3n\sqrt m\,e^{-4c\sqrt n}\le e^{-2c\sqrt n}$ for
all large $n$, there is $n_0\ge\max\{n_2,2/\alpha,1/\delta\}$, depending only
on $\alpha$, $K$ and $\nu$, such that $\EE R_r(G)\le e^{-2c\sqrt n}$ for every
$n\ge n_0$. By Markov's inequality,
\[
 \PP\bigl(R_r(G)>e^{-c\sqrt n}\bigr)\le e^{c\sqrt n}\,\EE R_r(G)
 \le e^{-c\sqrt n}.
\]
The proof is complete.
\end{proof}

\section*{Statement of AI use}

The first version of this paper \cite[v1]{gong2026strong} proved strong freezing
for the ABP with Gaussian disorder. That project grew out of the authors'
ongoing work on strong freezing in related models. During its development, the
authors interacted extensively with GPT-6, posing the problem to it and
providing it with a set of their own notes from earlier work and from the
related models. Working from these notes, GPT-6 suggested replacing $Z(G)$ by a
restricted partition function that counts the solutions with nonnegative
overlap with the planted solution. Unlike $Z(G)$, this restricted partition
function is monotone in the planted direction, which allowed a monotonicity
argument to complete the proof in \cite[v1]{gong2026strong}.

The present version started with the goal of extending the result of
\cite[v1]{gong2026strong} for the ABP to general disorder. In this process, GPT-6
proposed a new way of applying the FKG inequality: replacing the number of
constraints by a Poisson variable, which induces additional independence
between the constraints and allows the FKG inequality to decouple the full
partition function from the event that two nearby configurations are both
solutions. After digesting this argument, the authors found that it extends
to general sets $K$, which led to the main result of the present version and
covers both the ABP and the SBP. In addition, the authors used other large
language models, including ChatGPT and Claude, to polish and edit the writing.
The authors verified all mathematical content themselves, wrote the complete proof in its present form, and take full responsibility for the content of this paper.

\section*{Acknowledgment}
SG was supported by the Institute for Foundations of Data Science at Yale University.

\small
\bibliographystyle{alphaabbr}
\bibliography{refs.bib}

\end{document}